\setlist[enumerate,1]{label=\textnormal{(\emph{\roman*})}}
\definecolor{amber}{rgb}{1.0, 0.75, 0.0}
\definecolor{aogreen}{rgb}{0.0, 0.5, 0.0}
\definecolor{darksienna}{rgb}{0.24, 0.08, 0.08}
\newcommand{\dd}{\mathop{}\!\mathrm{d}}
\DeclareMathOperator*{\e}{e}
\DeclareMathOperator{\dist}{dist}
\DeclareMathOperator{\Range}{Range}
\newcommand{\R}{\mathbb{R}}
\newcommand{\N}{\mathbb{N}}
\newcommand{\C}{\mathbb{C}}
\theoremstyle{plain}
\newtheorem{theorem}{Theorem}[section]
\newtheorem{lemma}[theorem]{Lemma}
\newtheorem{corollary}[theorem]{Corollary}
\theoremstyle{definition}
\newtheorem{remark}[theorem]{Remark}
\newtheorem{assumption}{Assumption}
\numberwithin{equation}{section}
\numberwithin{table}{section}
\numberwithin{figure}{section}
\setlist[enumerate,1]{label=\textnormal{(\emph{\roman*})}}
\title{On the convergence of the pseudospectral approximation of reproduction numbers for age-structured models}
\author{Simone De Reggi\thanks{CDLAb - Computational Dynamics Laboratory, University of Udine, Italy}\thanksgap{2mm}$^,$\thanks{Department of Mathematics, Computer Science and Physics, University of Udine, Italy}\thanksgap{2mm}$^,$\thanksmark{4}
,\ Francesca Scarabel\thanksmark{1}\thanksgap{2mm}$^,$\thanks{Department of Applied Mathematics, University of Leeds, United Kingdom}\thanksgap{2mm}$^,$\thanksmark{4},\ Rossana Vermiglio\thanksmark{1}\thanksgap{2mm}$^,$\thanksmark{2}\thanksgap{2mm}$^,$\thanks{e-mail: \url{simone.dereggi@uniud.it},  \url{f.scarabel@leeds.ac.uk}, \url{rossana.vermiglio@uniud.it}}}
\date{September 2, 2023}
\begin{document}
\maketitle
\begin{abstract}
\noindent We rigorously investigate the convergence of a new numerical method, recently proposed by the authors, to approximate the reproduction numbers of a large class of age-structured population models with finite age span.
The method consists in reformulating the problem on a space of absolutely continuous functions via an integral mapping. 
For any chosen splitting into birth and transition processes, we first define an operator that maps a generation to the next one (corresponding to the Next Generation Operator in the case of $R_0$). Then, we approximate the infinite-dimensional operator with a matrix using pseudospectral discretization.
In this paper, we prove that the spectral radius of the resulting matrix converges to the true reproduction number, and the (interpolation of the) corresponding eigenvector converges to the associated eigenfunction, with convergence order that depends on the regularity of the model coefficients. Results are confirmed experimentally and applications to epidemiology are discussed.

\bigskip
\noindent\textbf{Keywords:} Basic reproduction number, error bounds, next generation operator, pseudospectral collocation, spectral approximation, spectral radius.

\smallskip
\noindent\textbf{2020 Mathematics Subject Classification:} 34L16, 47B07, 65L15, 65L60, 65N12,
92D25.
\end{abstract}

\section{Introduction}
Age-structured population models are often formulated as integro-partial differential equations with nonlocal boundary conditions. As a result, they generate infinite-dimensional dynamical systems where, typically, the state space $X$ is a space of $L^1$-integrable functions over the age-interval $(0, a^\dagger)$, where $a^\dagger\in(0, \infty)$ is the maximum age.
Such models have applications in many fields in the life sciences, including ecology, epidemiology and cell biology.
In all these contexts, reproduction numbers play a fundamental role, as they are threshold parameters that determine population/disease persistence or extinction, and can be linked to intervention measures for population/disease control \cite{Lewis2019}. Mathematically, they are characterized as the spectral radius of linear, compact, positive operators  \cite{diekmann1990, Inaba2017, thieme2009spectral}, which are obtained by linearizing the model around an equilibrium and then splitting the linearization into birth and transition \cite{barril2018practical, NGM2010, driessche2002}.  The resulting operators are typically infinite-dimensional, which makes the computation of the reproduction number hardly analytically achievable, in general.  

In \cite{BredaDeReggiScarabelVermiglioWu2022, BredaFlorianRipollVermiglio2021, BredaKuniyaRipollVermiglio2020, Theta2019, Kuniya2017}, numerical methods to approximate the reproduction numbers for age-structured models with finite age span, i.e., with $a^\dagger<+\infty$, are proposed. These methods are based on the idea of separately discretizing the birth and transition operators, and then approximating the reproduction numbers through the spectral radius of a matrix. In \cite{Theta2019, Kuniya2017}, the discretization is obtained via a Theta and a Backward Euler method, respectively, while in \cite{BredaDeReggiScarabelVermiglioWu2022, BredaFlorianRipollVermiglio2021, BredaKuniyaRipollVermiglio2020}, the discretization is obtained via Chebyshev pseudospectral collocation. 
However, all these methods rely on the assumption that the birth and transition operators are defined on a subspace of the state space $X$, including the boundary conditions in the domain of the transition operator (which in general is a differential operator), hence necessarily interpreting the corresponding processes as transition. 

To overcome this lack of flexibility, in \cite{de2024approximating} we introduce a general numerical method to approximate the reproduction numbers for a large class of age-structured models with finite age span, which consists in reformulating the problem on a space of absolutely continuous functions via integration of the age-state, within the extended space framework \cite{Inaba2006, Inaba2017, thieme2009spectral}. On the one hand, this approach permits us to see processes described by boundary conditions as perturbations of an operator with trivial domain condition and, on the other hand, it allows us to work with polynomial interpolation, since point evaluation is well defined. For any given splitting into birth and transition, we discretize the resulting operators via Chebyshev pseudospectral collocation.

In this paper, we rigorously investigate the convergence of the method presented in \cite{de2024approximating}.
To do so, we use the well-established spectral approximation theory of \cite{Chatelin1981}. To prove the norm convergence of the operators in $AC$, we take advantage of the injection of $AC$ into $L^1$, which is compact by the Rellich--Kondrach Theorem \cite[pp. 285, Theorem 9.16]{Brezis2011}, and of interpolation error bounds in the $L^1$-norm \cite[Theorem 1]{Prestin1993}. Note that this approach is not common in the literature as, typically, the convergence of spectral and pseudospectral methods is investigated in the supremum norm for continuous functions or in Hilbert spaces \cite{Boyd2001}.

We prove that the convergence order of the approximating reproduction numbers is driven by the interpolation error on the relevant (generalized) eigenfunctions, which in turn depends on the regularity of the model coefficients. We discuss applications to epidemiological models, which can require to work with piecewise constant coefficients in the case of data-informed parameters.    

The paper is organized as follows. In \cref{s3}, with reference to the prototype linear model considered in \cite{de2024approximating}, we illustrate the reformulation in the space of absolutely continuous functions together with the theoretical framework. In \cref{SectionNumerics}, we recall the numerical method of \cite{de2024approximating} and we prove the well-posedness of the discretized operator. The main contribution of the paper is the convergence proof in \cref{convanal}. In \cref{implementation}, we give  details about the implementation of the method, while \cref{results} contains numerical results and applications to epidemiological models. 

MATLAB demos are available at \url{http://cdlab.uniud.it/software}.

\section{Prototype model and theoretical background}\label{s3}
In this section, we make reference to the prototype linear age-structured population model with finite age span considered in \cite{de2024approximating}, which includes, as particular instances, many models of the literature obtained from the linearization of nonlinear models around an equilibrium. 
For $x(t, \cdot)\in L^1([0, a^\dagger], \R^d)$,  $t\ge 0$ and $d$ a positive integer, the model reads
\begin{equation}\label{prototype-model}
\left\{\setlength\arraycolsep{0.1em}\begin{array}{rlll} 
\mathcal D_{}x(t, a)&=&\displaystyle{\int_0^{a^\dagger}}\beta (a, \alpha) x(t,\alpha) \dd \alpha +\delta(a) x(t,a), & \qquad t\ge 0,\quad a\in [0, a^\dagger],\\ [3mm]
 x(t, 0)&=&\displaystyle\int_0^{a^\dagger}b(a) x(t, a)\dd a,  & \qquad t\ge 0,
\end{array} 
\right.
\end{equation}
where 
\begin{equation*}
\mathcal Dx(t, a):=\partial_t x(t, a)+\partial_a  x(t, a).
\end{equation*}
To conveniently split the inflow processes into two parts,  we assume that $\beta=\beta^++\beta^-$ and $b = b^{+} + b^{-}$, where $\beta^+, \beta^-, b^+, b^-, \delta$ satisfy the following requirements \cite[pp. 77]{Inaba2017}.
\begin{assumption}\label{ass1}
\hspace{0.5mm}
\begin{enumerate}
\item \label{ass11}
$\beta^+, \beta^- \in L^\infty([0,a^\dagger]^2,\R^{d\times d})$ are nonnegative, 
\item \label{ass12}
$b^+, b^-\in L^\infty([0,a^\dagger],\R^{d\times d})$ are nonnegative,
\item \label{ass13} $\delta\in L^\infty([0,a^\dagger],\R^{d\times d})$ is essentially nonnegative with non-positive diagonal elements. 
\end{enumerate} 
\end{assumption}
Note that \Cref{ass1} \ref{ass13} ensures that the fundamental solution matrix associated to $\delta$ is non-negative and non-singular \cite[pp. 77]{Inaba2017}.

Since we are interested in the spectral theory, we enlarge our attention to complex-valued functions, and we consider the Banach spaces $X:=L^1([0, a^\dagger], \C^d)$ and $Y:=AC([0, a^\dagger],\C^d)$, where the latter is equipped with the norm
$\|\psi\|_Y:= |\psi(0)|_{\C^d}+\|\psi'\|_X$. 
The Volterra operator $\mathcal V_0\colon X\to Y_0\subset Y$ defined as
\begin{equation*}\label{isom}
\mathcal V_0\phi(a):=\int_0^a \phi(\alpha)\dd \alpha,\qquad a\in[0, a^\dagger],
\end{equation*}
determines an isomorphism between $X$ and the closed subspace $Y_0=\{\psi\in Y\ |\ \psi(0)=0\}\subset Y$.

By defining $y(t,\cdot):= \mathcal V_0x(t,\cdot)$, \eqref{prototype-model} for $x(t,\cdot) \in X$ is equivalent to the following model
\begin{equation}\label{inteq}
\left\{\setlength\arraycolsep{0.1em}\begin{array}{rlll} 
 \mathcal D y(t, a)&=&\displaystyle\int_0^{a^\dagger}b(\alpha) y(t, \dd\alpha)&\\ &&+ \displaystyle\int_0^a\int_0^{a^\dagger}\beta(\zeta, \alpha) y(t, \dd\alpha)\dd\zeta&\\[3mm]
&&+\displaystyle\int_0^a\delta(\alpha) y(t,\dd\alpha), &\qquad t\ge 0,\quad  a\in [0, a^\dagger],\\ [3mm]
y(t, 0)&=&0,&\qquad t\ge 0.\\
\end{array} 
\right.
\end{equation}
Note that, in \eqref{inteq}, we interpret an absolutely continuous function as a measure.
Now,  we define a bounded linear operator $\mathcal{B}\colon Y \to Y$ accounting for birth
\begin{equation*}
\mathcal B\psi(a):=\int_0^a\int_0^{a^\dagger}\beta^+(\zeta, \alpha)\psi'(\alpha)\dd\alpha\dd\zeta+\int_0^{a^\dagger}b^+(\alpha)\psi'(\alpha)\dd\alpha,  \qquad a\in [0, a^\dagger],
\end{equation*}
and an unbounded linear operator  $\mathcal{M} \colon D(\mathcal M)\subset Y_0\to Y$ accounting for transition 
\begin{align*}
\mathcal M\psi(a):=&\ \psi'(a)-\int_0^a \delta(\alpha)\psi'(\alpha)\dd\alpha\\
&-\int_0^a\int_0^{a^\dagger}\beta^-(\zeta, \alpha)\psi'(\alpha)\dd\alpha\dd\zeta-\int_0^{a^\dagger}b^-(\alpha)\psi'(\alpha)\dd\alpha,\qquad a\in [0, a^\dagger],\\[2mm]
D(\mathcal M):=&\ \{\psi \in Y_0\ |\ \psi'\in Y\}.
\end{align*}
Hereafter, we assume that $\mathcal M$ is invertible with bounded inverse. 
Hence, we can define the operator 
\begin{equation}\label{H}
\mathcal H:= \mathcal{B} \mathcal{M}^{-1}\colon Y\to Y,
\end{equation}  
and characterize the \emph{reproduction number} $R$ for the birth process $\mathcal{B}$ and transition process $\mathcal{M}$ as its spectral radius \cite{de2024approximating, diekmann1990}, i.e., 
\begin{equation}\label{Rdef}
R:=\rho(\mathcal H).
\end{equation}  
Here, we generically refer to ``reproduction number'' to account for several different interpretations, including, as special cases, the basic reproduction number $R_0$ and the type reproduction number $T$, as well as more general definitions \cite{HEESTERBEEK20073,Inaba2017, van2017reproduction}. Note that in \eqref{H} the operator $\mathcal B$ actually acts on $D(\mathcal M)$.

To conclude this section, we prove that the operator $\mathcal H$ in \eqref{H} is compact in $Y$. To do so, we consider the operator $\mathcal W\colon X\to Y\subset X$ defined as
\begin{align*}
\mathcal W\phi(a):=&\int_0^{a}\delta(\alpha)\phi(\alpha)\dd\alpha +\int_0^a\int_0^{a^\dagger}\beta^-(\zeta, \alpha)\phi(\alpha)\dd\alpha\dd\zeta+\int_0^{a^\dagger}b^-(\alpha)\phi(\alpha)\dd\alpha, \qquad a\in[0, a^\dagger],
\end{align*}
and the natural immersion of $Y$ into $X$, $\mathcal J\colon Y\to X$, which is compact due to the Rellich--Kondrach embedding Theorem \cite[pp.~285, Theorem 9.16]{Brezis2011}.  
Note that, thanks to this, the restriction to $Y$ of a linear and bounded operator from $X$ to $Y$ is compact. In the following, to simplify the notation, we omit to write $\mathcal J$ when the domain of application is clear from the context. 

Now, let us observe that, for $\psi\in D(\mathcal M)$, we have 
\begin{equation}\label{MVJ}
\mathcal M\psi=\mathcal M\mathcal V_0\psi'=(\mathcal I_Y-\mathcal W)\psi',\qquad \psi'\in Y.
\end{equation}
Hence, it is easy to see that $\mathcal M$ is invertible with bounded inverse if and only if the operator $\mathcal I_Y- \mathcal W\colon Y\to Y$ is invertible with bounded inverse. 
Moreover, we can prove the following result that will be widely used later on in the paper.
\begin{lemma}\label{mandw}
The operator
\begin{gather*}
\mathcal I_{X}- \mathcal W\colon X\to X
\end{gather*}
is invertible with bounded inverse. Moreover, we have
\begin{equation}\label{explicitminv}
\mathcal M^{-1}=\mathcal V_0\left( \mathcal I_{X}-\mathcal  W\right)^{-1} \mathcal J, 
\end{equation}
and the following inequality holds
\begin{equation}\label{boundonM}
\|\mathcal M^{-1}\|_{Y_0\leftarrow Y} \le a^\dagger\cdot \|( \mathcal I_{X}-\mathcal  W)^{-1}\|_{X\leftarrow X}.
\end{equation}
\end{lemma}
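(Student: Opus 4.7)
The plan is to view $\mathcal{I}_X-\mathcal{W}$ as a compact perturbation of the identity on $X$ and reduce invertibility to injectivity via the Fredholm alternative; injectivity will then be inherited from the hypothesis on $\mathcal{M}$, which by \eqref{MVJ} is tantamount to invertibility of $\mathcal{I}_Y-\mathcal{W}$ on $Y$. The formula \eqref{explicitminv} and the bound \eqref{boundonM} will follow by straightforward bookkeeping, exploiting that $\mathcal{V}_0\colon X\to Y_0$ is (up to the norm chosen on $Y$) an isometric isomorphism.

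First I would observe that $\mathcal{W}\colon X\to X$ is compact. By its very definition $\mathcal{W}$ factors as $X\xrightarrow{\mathcal{W}} Y\xrightarrow{\mathcal{J}} X$, where the first arrow is bounded (each summand is an $L^\infty$ kernel acting as a bounded operator $X\to Y$) and $\mathcal{J}$ is compact by the Rellich--Kondrach embedding already invoked. Hence $\mathcal{W}$ is compact on $X$ and the Fredholm alternative applies to $\mathcal{I}_X-\mathcal{W}$. To verify injectivity, suppose $\phi\in X$ satisfies $\phi=\mathcal{W}\phi$. The right-hand side belongs to $Y$, so in fact $\phi\in Y$, and then $(\mathcal{I}_Y-\mathcal{W})\phi=0$. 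But the standing assumption that $\mathcal{M}$ is invertible with bounded inverse is equivalent, via \eqref{MVJ}, to the invertibility of $\mathcal{I}_Y-\mathcal{W}$ on $Y$, so $\phi=0$. The Fredholm alternative then yields invertibility of $\mathcal{I}_X-\mathcal{W}$ with bounded inverse.

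For \eqref{explicitminv}, given $\psi\in Y$ I set $\phi:=(\mathcal{I}_X-\mathcal{W})^{-1}\mathcal{J}\psi\in X$. Writing $\phi=\mathcal{J}\psi+\mathcal{W}\phi$ exhibits $\phi$ as a sum of two elements of $Y$, hence $\phi\in Y$; so $\mathcal{V}_0\phi\in Y_0$ and $(\mathcal{V}_0\phi)'=\phi\in Y$ place $\mathcal{V}_0\phi$ in $D(\mathcal{M})$. Applying \eqref{MVJ} gives $\mathcal{M}(\mathcal{V}_0\phi)=(\mathcal{I}_Y-\mathcal{W})\phi=\psi$, which establishes the formula. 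For \eqref{boundonM}, I would use that $\|\mathcal{V}_0\phi\|_Y=\|\phi\|_X$ (its derivative is $\phi$ and it vanishes at zero), together with the elementary estimate $\|\mathcal{J}\psi\|_X\le a^\dagger\|\psi\|_Y$ that comes from integrating the pointwise bound $|\psi(a)|\le|\psi(0)|+\|\psi'\|_X=\|\psi\|_Y$ over $[0,a^\dagger]$. Chaining these two bounds through \eqref{explicitminv} yields the stated inequality.

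The main conceptual hurdle is the injectivity step: one must notice that any kernel element of $\mathcal{I}_X-\mathcal{W}$ automatically \emph{bootstraps} into $Y$ because the image of $\mathcal{W}$ lies in $Y$, which is exactly what allows the hypothesis on $\mathcal{M}$ (an $Y$-level statement) to force triviality of the kernel at the $X$-level. Everything else is either a direct consequence of the Fredholm alternative or straightforward norm manipulation.
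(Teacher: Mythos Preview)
Your proof is correct and follows essentially the same approach as the paper: compactness of $\mathcal{W}$ on $X$ plus the Fredholm alternative, the bootstrap observation that any $X$-kernel element of $\mathcal{I}_X-\mathcal{W}$ lies in $Y$ (whence the assumed invertibility of $\mathcal{I}_Y-\mathcal{W}$ forces it to vanish), and then the formula and bound via $\|\mathcal{V}_0\|_{Y_0\leftarrow X}=1$ and $\|\mathcal{J}\|_{X\leftarrow Y}\le a^\dagger$. You supply a bit more detail than the paper (explicitly verifying $\phi\in Y$ in the derivation of \eqref{explicitminv} and justifying the bound on $\mathcal{J}$ pointwise), but the argument is the same.
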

\begin{proof}
Observe that $\mathcal W$ is compact in $X$. Thus, we only need to show that there exists no $\phi\in X\setminus\{0\}$ such that $\phi=\mathcal  W\phi$.
Suppose in fact that $\phi\in X$ solves $\phi=\mathcal W\phi$. 
Then, $\phi\in Y$ and the invertibility of $\mathcal I_Y-\mathcal W$ in $Y$ implies that $\phi=0$.

Now, let us note that given $\xi\in Y$, solving  $\mathcal M\psi=\xi$ in $Y$ for $\psi=\mathcal V_0\psi'$ is equivalent to solve 
\begin{equation*}
 \psi'-\mathcal W\psi'=\mathcal J\xi
\end{equation*}
in $X$ for $\psi'\in Y$, hence \eqref{explicitminv} holds.  Finally, the bound \eqref{boundonM} follows from \eqref{explicitminv} by observing that $\|\mathcal V_0\|_{Y_0\leftarrow X}=1$ and that 
\begin{equation}\label{normJ}
\|\mathcal J \xi\|_X \le a^\dagger \|\xi\|_Y,\qquad \xi \in Y.
\end{equation}
\end{proof}
From \Cref{mandw}, we have that the following diagram commutes
\begin{center}
\begin{tikzpicture}
  \matrix (m)
    [
      matrix of math nodes,
      row sep    = 3em,
      column sep = 6.5em
    ]
    {
      Y              & Y_0 \\
      X            &     X        \\
    };
  \path
    (m-1-1) edge [->] node [left] {$\mathcal J$} (m-2-1)
    (m-1-1.east) edge [->] node [above] {$\mathcal M^{-1}$} (m-1-2)
    (m-2-1.east) edge [->]  node [below] {$(\mathcal I_X-\mathcal W)^{-1}$} (m-2-2)
    (m-2-2) edge [->] node [right] {$\mathcal V_0$} (m-1-2);
\end{tikzpicture}
\end{center}
and, from the compactness of $\mathcal J$, we immediately get the following result.
\begin{corollary}\label{hcompact}
The operator $\mathcal H$ in \eqref{H} is compact and its spectrum consists of eigenvalues only. In addition, if $R$ in \eqref{Rdef} is positive, then it is a dominant real eigenvalue (in the sense of largest in magnitude) with associated a real non-decreasing eigenfunction.
\end{corollary}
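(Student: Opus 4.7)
The plan is to address the three assertions in turn, relying on the factorization of $\mathcal M^{-1}$ from \Cref{mandw} and the spectral theory of compact positive operators.

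\emph{Compactness.} Substituting the identity \eqref{explicitminv} into the definition \eqref{H} of $\mathcal H$ I get
\[
\mathcal H \;=\; \mathcal B\,\mathcal V_0\,(\mathcal I_X-\mathcal W)^{-1}\,\mathcal J : Y\to Y,
\]
where $\mathcal J:Y\to X$ is compact by Rellich--Kondrachov, while $(\mathcal I_X-\mathcal W)^{-1}:X\to X$ (by \Cref{mandw}), $\mathcal V_0:X\to Y$, and $\mathcal B:Y\to Y$ are bounded. Hence $\mathcal H$ is the composition of bounded operators with a compact one, and is therefore compact.

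\emph{Spectrum.} Since $\mathcal H$ is compact on the infinite-dimensional Banach space $Y$, Riesz--Schauder theory immediately gives that $\sigma(\mathcal H)\setminus\{0\}$ consists only of eigenvalues of finite algebraic multiplicity, which can accumulate only at $0$.

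\emph{Perron-type statement.} Assume $R=\rho(\mathcal H)>0$. I would apply the Krein--Rutman theorem to the closed, convex, pointed cone
\[
K:=\{\psi\in Y\ |\ \psi(0)\ge 0,\ \psi'\ge 0\ \text{a.e.}\}
\]
of componentwise non-negative, non-decreasing functions, which is reproducing via the positive/negative part decomposition of $\psi(0)\in\C^d$ and of $\psi'\in X$. To check $\mathcal H(K)\subseteq K$, take $\psi\in K$; then $\psi=\psi(0)+\mathcal V_0\psi'\ge 0$ as an element of $X$. Setting $\phi:=\mathcal M^{-1}\psi\in D(\mathcal M)\subset Y_0$, \eqref{explicitminv} yields $\phi'=(\mathcal I_X-\mathcal W)^{-1}\psi$. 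By \Cref{ass1} \ref{ass13}, $\delta$ is Metzler with non-positive diagonal, so its fundamental matrix is non-negative; together with the nonnegativity of $\beta^-,b^-$, a standard compartmental-type argument (absorb the diagonal part of $\delta$ via its fundamental matrix and iterate on the resulting positive Volterra equation) shows that $(\mathcal I_X-\mathcal W)^{-1}$ preserves the positive cone of $X$. Hence $\phi'\ge 0$, so since $\beta^+,b^+\ge 0$,
\[
(\mathcal B\phi)'(a)=\int_0^{a^\dagger}\beta^+(a,\alpha)\phi'(\alpha)\dd\alpha\ge 0,\qquad \mathcal B\phi(0)=\int_0^{a^\dagger}b^+(\alpha)\phi'(\alpha)\dd\alpha\ge 0,
\]
giving $\mathcal H\psi=\mathcal B\phi\in K$. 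Krein--Rutman for compact positive operators now yields that $R$ is an eigenvalue with an associated eigenvector in $K$, i.e.\ a real non-decreasing eigenfunction; dominance in magnitude is automatic from $R=\rho(\mathcal H)$.

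The main obstacle is the positivity of the resolvent $(\mathcal I_X-\mathcal W)^{-1}$: since $\delta$ has non-positive diagonal, $\mathcal W$ itself is \emph{not} a positive operator, so a naive Neumann series is unavailable. The intended remedy is to factor out the (non-negative) fundamental matrix of $\delta$, after which the remaining integral operator is genuinely positive and the resolvent can be constructed as a positive Volterra iteration perturbed by the rank-bounded $b^-$ contribution.
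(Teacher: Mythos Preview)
Your compactness and spectrum arguments coincide with the paper's: both factor $\mathcal M^{-1}$ through the compact embedding $\mathcal J$ via \eqref{explicitminv} and then invoke Riesz--Schauder.

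For the Krein--Rutman part the paper simply appeals to \cite{de2024approximating}, whereas you attempt a direct verification on the cone $K$ of non-decreasing functions. Your choice of cone and the reduction to positivity of $(\mathcal I_X-\mathcal W)^{-1}$ are correct, as is the idea of absorbing $\delta$ via its nonnegative fundamental matrix. The gap is in the last step. After the Volterra factorisation you are left with $(\mathcal I_X-\mathcal K)^{-1}$ for a genuinely \emph{positive} compact operator $\mathcal K$ built from $\beta^-,b^-$ and the fundamental matrix; but invertibility of $\mathcal I_X-\mathcal K$ alone does \emph{not} force this resolvent to be positive. Indeed, if $\rho(\mathcal K)>1$ and $v\ge 0$ is the Perron eigenvector, then $(\mathcal I_X-\mathcal K)^{-1}v=(1-\rho(\mathcal K))^{-1}v\le 0$. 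You need the subcriticality condition $\rho(\mathcal K)<1$, so that the Neumann series $\sum_{n\ge 0}\mathcal K^n$ converges and is positive. This is exactly the extra hypothesis hidden in the reference the paper cites (equivalently, that $-\mathcal M$ is resolvent-positive with negative spectral bound, i.e.\ the transition process by itself drives the population to extinction), and it is strictly stronger than the bare assumption ``$\mathcal M$ is invertible with bounded inverse'' stated in \cref{s3}. To make your argument self-contained you should state this assumption explicitly rather than leave it as a ``rank-bounded perturbation'' to be handled later.
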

\begin{proof}
Since $\mathcal B$ is bounded, it is enough to show that $\mathcal M^{-1}$ is compact. This follows by combining \eqref{explicitminv} with the compactness of $\mathcal J$. 
As a result, the spectrum of $\mathcal H$ consists of eigenvalues only \cite[Theorem 6.8]{Brezis2011}. Finally, the last assertion follows by combining the Krein--Rutmann Theorem \cite{krein1948linear} with the results of \cite[section 2]{de2024approximating}.
\end{proof}

\section{The numerical approach}\label{SectionNumerics}
To derive an approximation of $R$, we construct a finite-dimensional approximation $\mathcal H_N$ of $\mathcal H$, and we approximate the eigenvalues of the latter through those of the former. To do this, we separately discretize the operators $\mathcal B$ and $\mathcal M$ in \cref{s3} via pseudospectral collocation \cite{Boyd2001, Trefethen2000}. In the following, we adopt the MATLAB-like notation according to which elements of a column vector are separated by ``;'' while elements of a row vector are separated by ``,''.

\subsection{Discretization of $\mathcal B$ and $\mathcal M$}
Given a positive integer $N$, we consider the space $Y_N\subset Y$ of algebraic polynomials on $[0, a^\dagger]$ of degree at most $N$ and taking values in $\C^d$, together with
\begin{equation*}
Y_{0,N}:=\{\psi_N\in Y_N\ |\ \psi_N(0)=0\}\subset Y_0.
\end{equation*} 
Let $\Theta_N:=\{a_1<\dots <a_N\}$ denote a mesh of points in $(0, a^\dagger)$  and $\Theta_{0,N}:=\{a_0= 0\}\cup \Theta_N$. We define restriction and prolongation operators
respectively as
\begin{equation*}
\mathcal R_N\colon Y\to \C^{dN},\qquad \mathcal R_N\psi:=(\psi(a_1);\dots; \psi(a_N)),
\end{equation*}
and
 \begin{equation*}
\mathcal P_{0,N}\colon \C^{dN}\to Y_{0,N},\qquad 
\mathcal P_{0,N}\Psi:=\sum_{i=1}^N\ell_{0,i}\Psi_i,
\end{equation*}
where $\Psi:=(\Psi_1;\dots; \Psi_N)$, for $\Psi_i\in\C^d$, $i=1,\dots, N$, and $\{\ell_{0,i}\}_{i=0}^N$ is the Lagrange polynomial basis relevant to $\Theta_{0,N}$.
Observe that 
\begin{equation*}
\mathcal R_N\mathcal P_{0,N}=\mathcal I_{\C^{dN}},\qquad
\mathcal P_{0,N}\mathcal R_{N} = \mathcal L_{0,N},
\end{equation*}
where $\mathcal L_{0,N}\colon Y_0\to Y_{0,N}$ is the Lagrange interpolation operator relevant to $\Theta_{0,N}$.

Then, we derive the finite-dimensional approximations $\mathcal B_{N}, \mathcal M_{N}\colon \C^{dN}\to \C^{dN}$ of $\mathcal B$ and $\mathcal M$, respectively, as 
\begin{equation*}
    \mathcal B_{N}:=\mathcal R_{N}\mathcal B \mathcal P_{0,N}, \qquad \mathcal M_{N}:=\mathcal R_{N}\mathcal M \mathcal P_{0,N}.
\end{equation*}
In order to derive a finite-dimensional approximation $\mathcal H_N$ of $\mathcal H$, in the next section we show that there exists a positive integer $\bar N$ such that $\mathcal M_N$ is invertible for every integer $N\ge \bar N$ under the following assumption.
\begin{assumption}\label{assCheb}
$\Theta_N$ is the mesh of Chebyshev zeros \cite{Trefethen2013}.
\end{assumption}
Subsequently, $\mathcal H_N\colon\C^{dN}\to \C^{dN}$ is defined as 
\begin{equation*}
\mathcal H_N:=\mathcal B_{N}\mathcal M_{N}^{-1},\qquad N\ge \bar N.
\end{equation*} 

\subsection{Invertibility of $\mathcal M_N$}
We consider the Lagrange polynomial basis relevant to $\Theta_N$, $\{\ell_i\}_{i=1}^N$, and define the prolongation operator
\begin{equation*}
{\mathcal P}_{N-1}\colon \C^{dN}\to Y_{N-1}\subset Y,\qquad {\mathcal P}_{N-1}\Phi:=\sum_{i=1}^N\ell_i\Phi_i,
\end{equation*}
where $\Phi:=(\Phi_1;\dots; \Phi_N)$. 
Note that 
\begin{equation*}
\mathcal R_N\mathcal P_{N-1}=\mathcal I_{\C^{dN}},\qquad
\mathcal P_{N-1}\mathcal R_{N} = \mathcal L_{N-1},
\end{equation*}
where $\mathcal L_{N-1}\colon Y\to Y_{N-1}$ is the Lagrange interpolation operator relevant to $\Theta_{N}$.

Then, we observe that $\mathcal M_N$ is invertible if and only if, for every $\Xi\in \C^{dN}$, there exists a unique $\Psi\in \C^{dN}$ such that 
\begin{equation}\label{disceq}
\mathcal M_N\Psi=\Xi.
\end{equation}
In this case, let $\psi_N:=\mathcal P_{0,N}\Psi$. From $\psi_N=\mathcal V_0\psi'_N$, \eqref{MVJ} and \eqref{disceq}, we get
\begin{equation*}
\mathcal R_N \mathcal M\mathcal P_{0,N} \Psi=\mathcal R_N(\mathcal I_Y-\mathcal W)\psi'_N=\Xi,
\end{equation*}
and by applying $\mathcal P_{N-1}$, since $\psi'_N=\mathcal L_{N-1}\psi'_N$, we obtain
\begin{equation}\label{2}
(\mathcal I_Y-\mathcal L_{N-1}\mathcal W)\psi'_N=\mathcal P_{N-1}\Xi.
\end{equation}
Now, we prove that $\mathcal M_N$ is invertible by showing that \eqref{2} has a unique polynomial solution $\psi_N'$ in $X$.

\begin{lemma}\label{5}
Under \Cref{assCheb}, we have that  
\begin{equation}\label{convLn}
\left\|(\mathcal I_{X}-{\mathcal L}_{N-1}\mathcal W)
-(\mathcal I_{X}-\mathcal W)\right\|_{X\leftarrow X}\to 0,\qquad N\to \infty.
\end{equation}
Moreover, there exists a positive integer $\bar N$ such that, for every integer $ N\ge \bar N$,  $\mathcal I_{X}-{\mathcal L}_{N-1}\mathcal W$ is invertible and
\begin{equation}\label{boundLn}
\left\|(\mathcal I_{X}-{\mathcal L}_{N-1}\mathcal W)^{-1}\right\|_{X\leftarrow X}
\le 2\left\|(\mathcal I_{X}-\mathcal W)^{-1}\right\|_{X\leftarrow X}.
\end{equation}
Finally, $\mathcal M_N\colon \C^{dN}\to \C^{dN}$ is invertible for every integer $N\ge \bar N$ with inverse
\begin{align}\label{inversematrix}
\mathcal M_N^{-1}:=&\mathcal R_N\mathcal V_0\left(\mathcal I_{X} -{\mathcal L}_{N-1}\mathcal W\right)^{-1} \mathcal P_{N-1}.
\end{align}
\end{lemma}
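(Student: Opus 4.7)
I will prove the three assertions in the stated order. The core fact is that $\mathcal{W}\colon X\to X$ factors as $\mathcal{J}\circ(\mathcal{W}|^Y)$, where $\mathcal{W}|^Y\colon X\to Y$ is bounded (its image consists of absolutely continuous functions, by construction from antiderivatives of $L^1$ data) and $\mathcal{J}$ is compact. Thus $\mathcal{W}$ is both \emph{compact} in $X$ and has image in the more regular space $Y$, which is exactly the regularity needed to apply the Chebyshev interpolation error bound of [Prestin, 1993, Theorem~1].

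First I would establish \eqref{convLn}. Prestin's bound yields an estimate of the form
\begin{equation*}
\|\mathcal{L}_{N-1} g - g\|_X \le \varepsilon_N\|g\|_Y, \qquad g\in Y,
\end{equation*}
with $\varepsilon_N\to 0$ as $N\to\infty$ (under \Cref{assCheb}). Composing with $\mathcal{W}\colon X\to Y$ and taking the supremum over the unit ball of $X$ gives
\begin{equation*}
\|\mathcal{L}_{N-1}\mathcal{W} - \mathcal{W}\|_{X\leftarrow X} \le \varepsilon_N\|\mathcal{W}\|_{Y\leftarrow X} \longrightarrow 0,
\end{equation*}
which is \eqref{convLn}. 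This is the crucial step, and it is precisely where the regularising effect of $\mathcal{W}$ is used: a plain pointwise-convergence argument would fail because the Lebesgue constants of Chebyshev interpolation are unbounded in $L^\infty$.

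Next, \eqref{boundLn} follows by a standard Neumann-series perturbation. Writing $\mathcal{I}_X-\mathcal{L}_{N-1}\mathcal{W} = (\mathcal{I}_X-\mathcal{W})\bigl[\mathcal{I}_X + (\mathcal{I}_X-\mathcal{W})^{-1}(\mathcal{W}-\mathcal{L}_{N-1}\mathcal{W})\bigr]$ and choosing $\bar N$ so large that $\|\mathcal{W}-\mathcal{L}_{N-1}\mathcal{W}\|_{X\leftarrow X}\cdot\|(\mathcal{I}_X-\mathcal{W})^{-1}\|_{X\leftarrow X} \le 1/2$ for all $N\ge\bar N$, the geometric series converges and its norm is bounded by $2$, giving \eqref{boundLn}.

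Finally, for the invertibility of $\mathcal{M}_N$ and the formula \eqref{inversematrix}, I would follow the derivation already sketched in the excerpt: setting $\psi_N:=\mathcal{P}_{0,N}\Psi$ and applying $\mathcal{P}_{N-1}$ to $\mathcal{M}_N\Psi=\Xi$ leads to \eqref{2}. Because $\mathcal{P}_{N-1}\Xi\in Y_{N-1}$ and $\mathcal{L}_{N-1}\mathcal{W}(X)\subset Y_{N-1}$, any $X$-solution $\psi_N'$ of \eqref{2} automatically lies in $Y_{N-1}$; hence the invertibility just proved in $X$ produces a unique such polynomial, namely $\psi_N'=(\mathcal{I}_X-\mathcal{L}_{N-1}\mathcal{W})^{-1}\mathcal{P}_{N-1}\Xi$. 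Recovering $\Psi=\mathcal{R}_N\mathcal{V}_0\psi_N'$ yields the explicit expression \eqref{inversematrix}; uniqueness (and hence injectivity, which suffices in finite dimensions) follows by tracing the same chain of equivalences back with $\Xi=0$. The main obstacle is the first step, as noted above; the remaining two reductions are essentially algebraic manipulations using \Cref{mandw} and \eqref{MVJ}.
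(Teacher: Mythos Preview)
Your proposal is correct and follows essentially the same route as the paper: the paper also applies Prestin's $L^1$ interpolation bound to $\mathcal{W}$ (via its boundedness from $X$ to $Y$) to get \eqref{convLn}, then invokes the Banach perturbation lemma---which is exactly your Neumann-series argument---for \eqref{boundLn}, and finally reads off \eqref{inversematrix} from the equivalence between \eqref{disceq} and \eqref{2}. Your additional remark that any $X$-solution of \eqref{2} automatically lies in $Y_{N-1}$ is a helpful clarification the paper leaves implicit.
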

\begin{proof}
We use a proof technique similar to \cite[Chapter 5]{breda2014stability}. Let us observe that 
\begin{gather*}
\mathcal I_{X}-{\mathcal L}_{N-1}\mathcal W
=(\mathcal I_{X}-\mathcal W)+\left(\mathcal I_{X}-{\mathcal L}_{N-1}\right)\mathcal W.
\end{gather*}
Now, since $\Range(\mathcal W)\subset Y$ and $\mathcal W\colon X\to Y$ is bounded, \Cref{assCheb} 
 guarantees  that there exists a positive constant $C$ independent of $N$ such that \cite[Theorem 1]{Prestin1993} 
\begin{align}\label{primoprestin}
\|\left(\mathcal I_{X}-{\mathcal L}_{N-1}\right)\mathcal W\|_{X\leftarrow X}
\le C\cdot \|\mathcal W\|_{Y\leftarrow X}\cdot \frac{\log N}{N}\to 0,\qquad N\to \infty.
\end{align}
This gives \eqref{convLn}. Then, the Banach perturbation Lemma \cite[Theorem 10.1]{Kress1989} ensures that there exists $\bar N\in \N$ such that $\mathcal I_{X}-{\mathcal L}_{N-1}\mathcal W$ is invertible in $X$ and \eqref{boundLn} holds 
for every integer $N\ge \bar N$. Finally, \eqref{inversematrix} follows from \eqref{disceq} and \eqref{2}.
\end{proof}
Note that, if $\Xi=\mathcal R_N \xi$, for $\xi \in Y$, then the unique polynomial solution $\psi'_N$ of \eqref{2} is given by
$\psi_N'=(\mathcal I_X- \mathcal L_{N-1}\mathcal W)^{-1}\mathcal J\mathcal L_{N-1}\xi$.
Hence, by introducing the operator $\widehat{\mathcal M}_N^{-1}\colon Y\to Y_{0,N}$ defined as
\begin{equation}\label{continvM}
\widehat{\mathcal M}_N^{-1}:= \mathcal V_0(\mathcal I_X- \mathcal L_{N-1}\mathcal W)^{-1}\mathcal J\mathcal L_{N-1},
\end{equation}
we have that $\psi_N=\widehat{\mathcal M}_N^{-1}\xi$.

\section{Convergence analysis}\label{convanal}
In this section, we investigate the convergence of the eigenvalues (and the corresponding eigenvectors) of $\mathcal H_N$ to those of $\mathcal H$ via the spectral approximation theory of \cite{Chatelin1981}. 
To this aim, let us define the operator 
\begin{equation*}
\widehat{\mathcal H}_N:={\mathcal P}_{N-1}\mathcal H_N\mathcal R_N\colon Y\to Y,
\end{equation*}
which has the same nonzero eigenvalues with the same geometric and partial multiplicities of $\mathcal H_N$ \cite[Proposition 4.1]{BredaMasetVermiglio2012}. 
Then, since $\widehat{\mathcal H}_N-\mathcal H$ is compact for every $N\ge\bar N$ thanks to \Cref{hcompact}, it is sufficient to show that \cite[pp. 498]{Chatelin1981}
\begin{equation}\label{convnormH}
\|\widehat{\mathcal H}_N-\mathcal H\|_Y\to 0, \qquad N\to \infty.
\end{equation}
From \eqref{continvM}, we have that $\mathcal L_{0,N}\widehat{\mathcal M}_N^{-1}=\widehat{\mathcal M}_N^{-1}$, hence we can write
\begin{align*}
\widehat{\mathcal H}_N={\mathcal L}_{N-1}\mathcal B\widehat{\mathcal M}_N^{-1},
\end{align*}
and 
\begin{align*}
\widehat{\mathcal H}_N-\mathcal H
=({\mathcal L}_{N-1}-\mathcal I_Y)\mathcal B (\widehat{\mathcal M}_N^{-1}-\mathcal M^{-1})+({\mathcal L}_{N-1}-\mathcal I_Y)\mathcal B\mathcal M^{-1}
+\mathcal B (\widehat{\mathcal M}_N^{-1}-\mathcal M^{-1}).
\end{align*} 
Thus, in order to prove the convergence of $\widehat{\mathcal H}_N $ to $\mathcal H$, we need to investigate the behavior of $({\mathcal L}_{N-1}-\mathcal I_Y)\mathcal B$ and $ \widehat{\mathcal M}_N^{-1}-\mathcal M^{-1}$ as $N\to \infty$.

\subsection{Convergence in norm of $\widehat{\mathcal H}_N $ to $\mathcal H$}
\begin{lemma}\label{A}
Let \Cref{assCheb} hold. Then  
\begin{equation*}
\|\widehat{\mathcal M}_N^{-1}-\mathcal M^{-1}\|_{Y_0\leftarrow Y}\to 0,\qquad N\to \infty.
\end{equation*}
\end{lemma}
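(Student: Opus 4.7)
The plan is to exploit the explicit formulas \eqref{explicitminv} for $\mathcal M^{-1}$ and \eqref{continvM} for $\widehat{\mathcal M}_N^{-1}$. Setting $A := (\mathcal I_X - \mathcal W)^{-1}$ and, for $N \ge \bar N$, $A_N := (\mathcal I_X - \mathcal L_{N-1}\mathcal W)^{-1}$, the uniform bound $\|A_N\|_{X \leftarrow X} \le 2\|A\|_{X \leftarrow X}$ from \eqref{boundLn} is available, and
\begin{equation*}
\widehat{\mathcal M}_N^{-1} - \mathcal M^{-1} = \mathcal V_0 \bigl[A_N \mathcal J \mathcal L_{N-1} - A \mathcal J \bigr].
\end{equation*}

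First, I would split the bracket by adding and subtracting $A_N \mathcal J$,
\begin{equation*}
A_N \mathcal J \mathcal L_{N-1} - A \mathcal J = A_N \mathcal J (\mathcal L_{N-1} - \mathcal I_Y) + (A_N - A) \mathcal J,
\end{equation*}
and apply the second resolvent identity to rewrite the last term as $(A_N - A)\mathcal J = -A_N(\mathcal I_X - \mathcal L_{N-1})\mathcal W A \mathcal J$. This isolates the two places where pseudospectral error enters: the interpolation of the input of $\mathcal M^{-1}$ and the interpolation inside the inverse.

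Next, using $\|\mathcal V_0\|_{Y_0 \leftarrow X} = 1$, the uniform bound on $\|A_N\|_{X \leftarrow X}$, $\|A\|_{X \leftarrow X}<\infty$, and $\|\mathcal J\|_{X \leftarrow Y} \le a^\dagger$ from \eqref{normJ}, the estimate reduces to showing
\begin{equation*}
\|(\mathcal I_X - \mathcal L_{N-1})\|_{X \leftarrow Y} \to 0 \quad \text{and} \quad \|(\mathcal I_X - \mathcal L_{N-1})\mathcal W\|_{X \leftarrow X} \to 0
\end{equation*}
as $N \to \infty$. The second is already \eqref{primoprestin}, and the first follows again from \cite[Theorem 1]{Prestin1993}, since every $\psi \in Y$ has $\psi' \in X$, yielding the rate $O(\log N / N)$.

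The main (mild) obstacle is bookkeeping of the various norms rather than any deep analytic difficulty: one has to verify that each composition really maps between the claimed spaces (e.g., that $\mathcal L_{N-1}$ acts meaningfully on $Y$, which is legitimate because $AC$ functions admit pointwise evaluation), and that the uniform $X \leftarrow X$ boundedness of $A_N$ transfers correctly through the pre- and post-composition with $\mathcal V_0$ and $\mathcal J$. Once these identifications are made, the estimate is immediate and in fact delivers the quantitative rate $\|\widehat{\mathcal M}_N^{-1} - \mathcal M^{-1}\|_{Y_0 \leftarrow Y} = O(\log N / N)$, which will be useful for the subsequent eigenvalue convergence analysis.
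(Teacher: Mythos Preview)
Your proposal is correct and follows essentially the same route as the paper: the same decomposition $A_N\mathcal J(\mathcal L_{N-1}-\mathcal I_Y)+(A_N-A)\mathcal J$, the same resolvent identity $A_N-A=A_N(\mathcal L_{N-1}-\mathcal I_X)\mathcal W A$, and the same two Prestin-type estimates. The only difference is notational---the paper writes $\mathcal J(\mathcal L_{N-1}-\mathcal I_Y)$ where you write $(\mathcal I_X-\mathcal L_{N-1})$ as a map $Y\to X$---but you already flag that identification in your bookkeeping remark.
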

\begin{proof}
Observe that, from \eqref{explicitminv} and \eqref{continvM}, we can write $\widehat{\mathcal M}_N^{-1}-\mathcal M^{-1}$ as
\begin{align}
\label{M1}\widehat{\mathcal M}_N^{-1}-\mathcal M^{-1}=&\ \mathcal V_0[(\mathcal I_{X} -{\mathcal L}_{N-1}\mathcal W)^{-1}\mathcal J({\mathcal L}_{N-1}-\mathcal I_{Y})\\
\label{M2}&+\left((\mathcal I_{X} -{\mathcal L}_{N-1}\mathcal W)^{-1}-(\mathcal I_{X} -\mathcal W)^{-1}\right)\mathcal J].
\end{align}
Simple computations show that
\begin{equation}
\label{invespezz}(\mathcal I_{X} -{\mathcal L}_{N-1}\mathcal W)^{-1}
-(\mathcal I_{X}-\mathcal W)^{-1}=(\mathcal I_{X} -{\mathcal L}_{N-1}\mathcal W)^{-1} ({\mathcal L}_{N-1}-\mathcal  I_X)\mathcal W(\mathcal I_{X}-\mathcal W)^{-1}.
\end{equation}
Hence, from \eqref{normJ} and \eqref{boundLn}, we get
\begin{align}
\label{boundMN1} \|\widehat{\mathcal M}_N^{-1}-\mathcal M^{-1}\|_{Y_0\leftarrow Y}
\le\ &C_1\left\|
\mathcal J({\mathcal L}_{N-1}-\mathcal I_{Y})\right\|_{X\leftarrow Y}\\ \label{boundMN2}
&+\frac{a^\dagger C_1^2}{2} \|({\mathcal L}_{N-1}-\mathcal  I_X)\mathcal W\|_{X\leftarrow X},
\end{align}
where $C_1:=2\|(\mathcal I_{X}-\mathcal W)^{-1}\|_{X\leftarrow X}$.
The term in \eqref{boundMN2} tends to zero as $N\to \infty$ thanks to \eqref{primoprestin}.
As for the term on the right-hand side of \eqref{boundMN1}, \Cref{assCheb} ensures that there exists a positive constant $C_2$ independent of $N$ such that \cite[Theorem 1]{Prestin1993}
\begin{align*}
\|\mathcal J({\mathcal L}_{N-1}-\mathcal I_{Y})\|_{X\leftarrow Y}\le C_2\cdot \frac{\log N}{N}\to 0,\qquad N\to \infty.
\end{align*}
The thesis follows.
\end{proof}
Now, in order to prove \eqref{convnormH}, we make the following assumption.
\begin{assumption}\label{beta}
$\beta^+(\cdot, \alpha)\in C([0, a^\dagger], \R^{d\times d})$ for almost all $\alpha\in [0, a^\dagger]$. 
\end{assumption}
\begin{theorem}
Let \Cref{assCheb} and \Cref{beta} hold. Then $\|\widehat{\mathcal H}_N -\mathcal H\|_Y\to 0$ as $N\to \infty$.
\end{theorem}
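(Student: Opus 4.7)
Plan:

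The starting point is the three-term decomposition
\[
\widehat{\mathcal H}_N - \mathcal H = (\mathcal L_{N-1} - \mathcal I_Y)\mathcal B(\widehat{\mathcal M}_N^{-1} - \mathcal M^{-1}) + (\mathcal L_{N-1} - \mathcal I_Y)\mathcal B \mathcal M^{-1} + \mathcal B(\widehat{\mathcal M}_N^{-1} - \mathcal M^{-1})
\]
already displayed in the excerpt. The third summand tends to zero in the $Y$-operator norm by combining \Cref{A} with the boundedness of $\mathcal B$ on $Y$. The remaining two summands both involve the factor $(\mathcal L_{N-1}-\mathcal I_Y)\mathcal B$, so the essential work is to estimate this factor in norm, composed where needed with the additionally regularizing operator $\mathcal M^{-1}$.

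\Cref{beta} upgrades the regularity of the range of $\mathcal B$: for every $\psi \in Y$, the image $\phi := \mathcal B\psi$ belongs to $C^1([0,a^\dagger], \C^d)$, because its derivative $g_\psi(a) := \int_0^{a^\dagger}\beta^+(a,\alpha)\psi'(\alpha)\dd\alpha$ is continuous in $a$ by dominated convergence and the essential boundedness of $\beta^+$, while $\phi(0)=:c_\psi$ is a constant in $\C^d$. The $Y$-norm of the interpolation error splits as
\[
\|(\mathcal L_{N-1}-\mathcal I_Y)\phi\|_Y = |(\mathcal L_{N-1}\phi)(0)-c_\psi| + \|(\mathcal L_{N-1}\phi)'-g_\psi\|_X,
\]
and the first summand is $O(\log N /N)\,\|g_\psi\|_\infty$ uniformly over $\|\psi\|_Y \le 1$, by the classical Chebyshev estimate on $C^1$ data (Jackson together with the $\log N$ Lebesgue constant). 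The $L^1$-derivative piece is where \Cref{beta} must be exploited further: using the factorization $\phi = c_\psi + \mathcal V_0 g_\psi$ and the invariance of $\mathcal L_{N-1}$ on constants, the task reduces to controlling $\|[\mathcal L_{N-1}\mathcal V_0 g_\psi]' - g_\psi\|_X$.

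I would attack this reduced problem by introducing a near-best uniform polynomial approximant $q_{N-2}$ to $g_\psi$ of degree at most $N-2$, so that the auxiliary polynomial $p_{N-1} := \mathcal V_0 q_{N-2}$ of degree at most $N-1$ is fixed by $\mathcal L_{N-1}$, and the error decomposes as
\[
[\mathcal L_{N-1}\mathcal V_0 g_\psi]' - g_\psi = [\mathcal L_{N-1}(\mathcal V_0 g_\psi - p_{N-1})]' + (q_{N-2}-g_\psi).
\]
The tail $\|g_\psi - q_{N-2}\|_\infty$ is controlled by Jackson, while the polynomial piece combines the Prestin $L^1$-interpolation bound (already used in \Cref{A}) with a Bernstein polynomial inverse inequality on the polynomial $\mathcal L_{N-1}(\mathcal V_0 g_\psi - p_{N-1})$. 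Uniformity over the unit ball of $Y$, needed for Term 2, is secured by composing with the compact operator $\mathcal M^{-1}$: the set $\{(\mathcal M^{-1}\xi)' : \|\xi\|_Y\le 1\}$ is precompact in $L^1$, and its image under the bounded linear map $f\mapsto \int_0^{a^\dagger}\beta^+(\cdot,\alpha)f(\alpha)\dd\alpha$ from $L^1$ into $C$, well-defined thanks to \Cref{beta}, is therefore precompact in $C$ and so, by Arzel\`a--Ascoli, possesses a common modulus of continuity $\omega$ with $\omega(\delta)\to 0$ as $\delta\to 0$.

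The main obstacle is quantitative: \Cref{beta} supplies only continuity of $g_\psi$ in $a$, so the common modulus $\omega$ comes with no a priori rate, and the Bernstein inverse inequality naively introduces an unwanted factor $N$. The proof must therefore pair the qualitative equicontinuity with the Prestin $O(\log N/N)$ gain rather than with a bare $O(\log N)$ Lebesgue-constant estimate, so that the $N$ produced by the inverse inequality is absorbed and the resulting product $N\cdot (\log N/N)\cdot \omega(1/N) = \log N\cdot \omega(1/N) \to 0$ closes Term 2. Term 1 is then handled by the uniform boundedness of $\|(\mathcal L_{N-1}-\mathcal I_Y)\mathcal B\|_Y$ falling out of the same analysis, combined with \Cref{A}. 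Summing the three estimates delivers $\|\widehat{\mathcal H}_N-\mathcal H\|_Y \to 0$.
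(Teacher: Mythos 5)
There is a genuine gap in the quantitative core of your argument for the derivative piece. Your plan stands or falls on the final accounting ``$N\cdot(\log N/N)\cdot\omega(1/N)=\log N\cdot\omega(1/N)\to 0$'', and this limit is simply false for a general modulus of continuity: \Cref{beta} gives only continuity of $a\mapsto\int_0^{a^\dagger}\beta^+(a,\alpha)\psi'(\alpha)\dd\alpha$, and the equicontinuity you extract from the compactness of $\mathcal M^{-1}$ via Arzel\`a--Ascoli comes with no rate whatsoever. Taking, e.g., a common modulus behaving like $\omega(\delta)=\bigl(\log(e/\delta)\bigr)^{-1}$ gives $\log N\cdot\omega(1/N)\not\to 0$; what you would need is a Dini--Lipschitz--type condition $\omega(\delta)\log(1/\delta)\to 0$, which neither \Cref{beta} nor compactness supplies. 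The bookkeeping is in fact worse than you state: the unweighted Markov--Bernstein inequality for algebraic polynomials in $L^1$ on an interval carries a factor $N^2$, not $N$ (the factor $N$ holds only for the $\sqrt{1-x^2}$-weighted Bernstein inequality or for trigonometric polynomials), so even under your optimistic reading the product diverges. Hence the estimate of $\|[\mathcal L_{N-1}\mathcal V_0 g_\psi]'-g_\psi\|_X$, and with it your treatment of the term $(\mathcal L_{N-1}-\mathcal I_Y)\mathcal B\mathcal M^{-1}$ and of the uniform bound on $(\mathcal L_{N-1}-\mathcal I_Y)\mathcal B$ ``falling out of the same analysis'', does not close.

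The paper avoids rates altogether. It proves, for each fixed $\psi$, that $(\mathcal L_{N-1}-\mathcal I_Y)\mathcal B\psi\to 0$ in $Y$ by invoking two qualitative convergence theorems: Krylov's theorem for the sup-norm convergence of Chebyshev interpolation of absolutely continuous functions (boundary value at $a=0$), and Szabados' theorem on mean ($L^1$) convergence of the \emph{derivatives} of Lagrange interpolants when the interpolated function has a continuous derivative, which is exactly where \Cref{beta} enters. Uniform boundedness of $(\mathcal L_{N-1}-\mathcal I_Y)\mathcal B$ then follows from the Banach--Steinhaus theorem, the term $(\mathcal L_{N-1}-\mathcal I_Y)\mathcal B\mathcal M^{-1}$ converges in operator norm because strong convergence composed with the compact operator $\mathcal M^{-1}$ yields norm convergence (Kress, Theorem 10.6), and the remaining two terms are handled by \Cref{A}. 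If you want to keep your quantitative route you must either cite such a mean-convergence-of-derivatives result in place of your near-best-approximant/inverse-inequality construction, or strengthen the hypothesis on $\beta^+$ (e.g.\ a Dini--Lipschitz or H\"older modulus in $a$), which the theorem as stated does not assume.
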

\begin{proof}
Let us observe that
\begin{align}
\label{a}\|\widehat{\mathcal H}_N -\mathcal H\|_{Y\leftarrow Y}
&\le\|({\mathcal L}_{N-1}-\mathcal I_Y)\mathcal B\|_{Y\leftarrow Y_0}\cdot \|\widehat{\mathcal M}_N^{-1}-\mathcal M^{-1}\|_{Y_0\leftarrow Y}\\
\label{b}
&+\|({\mathcal L}_{N-1}-\mathcal I_Y)\mathcal B\mathcal M^{-1}\|_{Y\leftarrow Y}\\
\label{c}
&+\|\mathcal B \|_{Y\leftarrow Y_0}\cdot \|\widehat{\mathcal M}_N^{-1}-\mathcal M^{-1}\|_{Y_0\leftarrow Y}.
\end{align} 
The term in \eqref{c} tends to zero as $N\to \infty$ thanks to \Cref{A}. As for \eqref{a} and \eqref{b}, let us recall that for every $\psi\in Y_0$ we have
\begin{align}\label{Bbound}
\|({\mathcal L}_{N-1}-\mathcal I_Y)\mathcal B\psi\|_{Y}&=\left|({\mathcal L}_{N-1}-\mathcal I_Y)\mathcal B\psi_{|_{a=0}}\right|_{\C^{d}}+\left\|\left(({\mathcal L}_{N-1}-\mathcal I_Y)\mathcal B\psi\right)'\right\|_X.
\end{align}
The first term on the right-hand side of \eqref{Bbound} tends to zero as $N\to \infty$ since $\mathcal BY_0\subseteq Y$ ensures the convergence of the Lagrange interpolation at the Chebyshev zeros in the infinite-norm \cite[Theorem 1]{Krylov1956}. As for the second term, \Cref{beta} guarantees that $(\mathcal B \psi)'\in C([0, a^\dagger], \C^d)$, from which it follows that \cite[Theorem 1]{Szabados1990}
\begin{equation}\label{termB}
\left\|({\mathcal L}_{N-1}\mathcal B\psi)'-(\mathcal B\psi)'\right\|_X\to 0,\qquad N\to \infty.
\end{equation}
Since $\mathcal M^{-1}$ is compact, this implies that \eqref{b} tends to zero as $N\to \infty$ \cite[Theorem 10.6]{Kress1989}. Finally, from the uniform boundedness principle \cite[Theorem 2.2]{Brezis2011}, we get
\begin{equation}\label{unifboundB}
\sup_{N\in\N}{\|({\mathcal L}_{N-1}-\mathcal I_Y)\mathcal B\|_{Y\leftarrow Y_0}}<\infty.
\end{equation}
Hence, \eqref{a} tends to zero as $N\to \infty$ thanks to  \Cref{A}. 
\end{proof}

\subsection{Convergence of the eigenvalues and the eigenspaces}
\begin{theorem}\label{teospec}
Let \Cref{assCheb} and \Cref{beta} hold. Let $\lambda\in\C$ be an isolated nonzero eigenvalue of $\mathcal H$ with finite algebraic multiplicity $m$ and ascent $l$ and let $\Delta$ be a neighborhood of $\lambda$ such that $\lambda$ is the sole eigenvalue of $\mathcal H$ in $\Delta$. Then there exists $\bar N$ such that, for $N\ge \bar N$, $\widehat{\mathcal H}_N$ has in $\Delta$ exactly $m$ eigenvalues $\lambda_{N, i}$, $i=1,\dots, m$, counting their multiplicities. Moreover, 
\begin{align*}
\max_{i=1, \dots, m}|\lambda_{N, i}-\lambda|=O\big(\varepsilon_N^{1/l}\big)
\end{align*}
where
\begin{equation}\label{epsilon}
\varepsilon_N:=\|\mathcal {\widehat H}_N-\mathcal H\|_{Y\leftarrow \mathcal M_{\lambda}}
\end{equation}
and $\mathcal M_{\lambda}$ is the generalized eigenspace of $\lambda$. 
Finally, for any $i=1,\dots, m$ and for any eigenfunction $\psi_{N, i}$ of $\mathcal {\widehat H}_N$ relevant to $\lambda_{N,i}$ such that $\|\psi_{N,i}\|_Y=1$, we have
\begin{equation*}
\dist(\psi_{N,i},\  \ker(\lambda \mathcal I_Y-\mathcal H))=O(\varepsilon_N^{1/l}),
\end{equation*}
where $\dist$ is the distance in the space $Y$ between an element and a subspace.
\end{theorem}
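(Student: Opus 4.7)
The plan is to apply the abstract spectral approximation theory of \cite{Chatelin1981} essentially off the shelf, since all the analytic ingredients are already in place. By \Cref{hcompact}, $\mathcal H$ is compact on $Y$; $\widehat{\mathcal H}_N$ is automatically compact because its range lies in the finite-dimensional subspace $\mathcal P_{N-1}(\C^{dN})$; and the preceding theorem supplies $\|\widehat{\mathcal H}_N - \mathcal H\|_{Y\leftarrow Y}\to 0$. This is precisely Chatelin's setting for norm-convergent approximation of a compact operator at an isolated part of the spectrum.

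First, I would invoke the classical upper semicontinuity of the spectrum to obtain $\bar N$ and the counting statement: the spectral (Riesz) projector $\widehat P_{N,\Delta}$ of $\widehat{\mathcal H}_N$ attached to $\Delta$ converges in norm to the Riesz projector $P_\lambda$ of $\mathcal H$ attached to $\lambda$, and since the rank of a projector is stable under small perturbations in norm, $\mathrm{rank}(\widehat P_{N,\Delta})=\mathrm{rank}(P_\lambda)=m$ for $N\ge \bar N$. For the quantitative estimates, the plan is to quote the Chatelin bound for isolated eigenvalues of finite algebraic multiplicity and ascent $l$, which gives
\begin{equation*}
\max_{i=1,\dots,m}|\lambda_{N,i}-\lambda| = O\big(\varepsilon_N^{1/l}\big), \qquad \dist\big(\psi_{N,i},\ \ker(\lambda\mathcal I_Y-\mathcal H)\big)=O\big(\varepsilon_N^{1/l}\big),
\end{equation*}
with $\varepsilon_N$ as in \eqref{epsilon}. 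Heuristically, the perturbation is measured only on the finite-dimensional generalized eigenspace $\mathcal M_\lambda$ because the dominant contribution to $\widehat P_{N,\Delta}-P_\lambda$ in a Dunford integral expansion involves $(\widehat{\mathcal H}_N-\mathcal H)$ acting on vectors in $\mathrm{range}(P_\lambda)=\mathcal M_\lambda$; the $1/l$ exponent reflects the Jordan structure, since the perturbation of a nilpotent block of order $l$ scales like an $l$-th root of the perturbation amplitude and is sharp in general.

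The main point to pin down, rather than a genuine mathematical difficulty, is citing the exact form of Chatelin's theorem with the sharp $\mathcal M_\lambda$-restricted norm as the measure of perturbation, as opposed to the full operator norm $\|\widehat{\mathcal H}_N-\mathcal H\|_{Y\leftarrow Y}$. Since this is clearly the statement the definition of $\varepsilon_N$ in \eqref{epsilon} is built around, the remainder of the argument is a direct reduction to that reference applied to the triple $(\mathcal H, \widehat{\mathcal H}_N, \lambda)$, whose required compactness and norm-convergence properties have all been secured in the preceding analysis.
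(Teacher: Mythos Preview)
Your proposal is correct and mirrors the paper's own proof, which simply invokes \cite[Proposition 2.3 and Proposition 4.1]{Chatelin1981} once norm convergence $\|\widehat{\mathcal H}_N-\mathcal H\|_Y\to 0$ and compactness of $\mathcal H$ have been established. You give more surrounding explanation than the paper does, but the logical content is identical.
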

\begin{proof}
The thesis follows from \cite[Proposition 2.3 and Proposition 4.1]{Chatelin1981}.
\end{proof}
Now, we complete the analysis with error bounds under each of the following regularity conditions.

\begin{assumption}\label{ass2}
\hspace{2mm}
\begin{enumerate}
\item \label{ass21} $\beta^+, \beta^- \in W^{s, \infty}([0,a^\dagger]^2,\R^{d\times d})$ and $\delta\in W^{s, \infty}([0,a^\dagger],\R^{d\times d})$  for some integer $s\ge 1$,
\item \label{ass22} $\beta^+, \beta^- \in C^\infty([0,a^\dagger]^2,\R^{d\times d})$ and $\delta\in C^\infty([0,a^\dagger],\R^{d\times d})$,
\item \label{ass23} $\beta^+, \beta^-, \delta$ are real analytic.
\end{enumerate} 
\end{assumption}

\begin{corollary}\label{corollbound}
Let $\lambda$, $\lambda_{N,i}$, $i=1,\dots, m$, $\varepsilon_N$, and $\mathcal M_\lambda$  be as in \Cref{teospec}, and let \Cref{assCheb} and \Cref{ass2} hold. 
Then $\varepsilon_N=O(\rho_N)$, where
\begin{equation*}
\rho_N:=\begin{cases}
N^{-s}\log N\quad&\text{under \Cref{ass2} \ref{ass21}},\\
N^{-r}\log N\quad&\text{for every integer $r\ge 1$ under \Cref{ass2} \ref{ass22}},\\
p^{-N}\log N\quad&\text{for some constant $p>1$ under \Cref{ass2} \ref{ass23}}.
\end{cases}
\end{equation*}
\end{corollary}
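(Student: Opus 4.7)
The plan is to refine the three-term decomposition of $\widehat{\mathcal H}_N-\mathcal H$ used in the previous theorem by exploiting that the test functions now range over the finite-dimensional generalized eigenspace $\mathcal M_\lambda$ rather than the whole of $Y$. On $\mathcal M_\lambda$ the relevant functions are very smooth, because $\mathcal H=\mathcal B\mathcal M^{-1}$ is an integral operator whose kernel is built from $\beta^\pm$, $b^\pm$ and $\delta$: any $\psi\in\mathcal M_\lambda$ satisfies $(\lambda\mathcal I_Y-\mathcal H)^l\psi=0$, and a bootstrap based on the identity $\psi=\lambda^{-1}\mathcal H(\psi-\eta_\psi)$, with $\eta_\psi$ in a strictly lower-order generalized eigenspace, shows that $\psi$ belongs to the same regularity class as the coefficients. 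The same regularity then transfers to $\mathcal B\psi$, $\mathcal M^{-1}\psi$, and $(\mathcal I_X-\mathcal W)^{-1}\mathcal J\mathcal B\psi$, which are the only functions that need to be interpolated in what follows.

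For $\psi\in\mathcal M_\lambda$ with $\|\psi\|_Y\le 1$, I would bound
\begin{align*}
\|(\widehat{\mathcal H}_N-\mathcal H)\psi\|_Y
\le\ &\|(\mathcal L_{N-1}-\mathcal I_Y)\mathcal B\|_{Y\leftarrow Y_0}\cdot\|(\widehat{\mathcal M}_N^{-1}-\mathcal M^{-1})\psi\|_{Y_0}\\
&+\|(\mathcal L_{N-1}-\mathcal I_Y)\mathcal B\mathcal M^{-1}\psi\|_Y
+\|\mathcal B\|_{Y\leftarrow Y_0}\cdot\|(\widehat{\mathcal M}_N^{-1}-\mathcal M^{-1})\psi\|_{Y_0},
\end{align*}
and use \eqref{unifboundB} to control the first factor uniformly. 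Then everything reduces to two ingredients: (a) $\|(\widehat{\mathcal M}_N^{-1}-\mathcal M^{-1})\psi\|_{Y_0}$, which via \eqref{M1}--\eqref{M2} and \eqref{invespezz}--\eqref{boundLn} is bounded by $\|(\mathcal L_{N-1}-\mathcal I_Y)\mathcal B\psi\|_X$ and $\|(\mathcal L_{N-1}-\mathcal I_X)\mathcal W(\mathcal I_X-\mathcal W)^{-1}\mathcal J\mathcal B\psi\|_X$; and (b) $\|(\mathcal L_{N-1}-\mathcal I_Y)\mathcal B\mathcal M^{-1}\psi\|_Y=\|(\mathcal L_{N-1}-\mathcal I_Y)(\lambda\psi+\eta_\psi)\|_Y$. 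In each case we are interpolating a function with the same regularity as the coefficients, in either $X$- or $Y$-norm (the $Y$-norm pieces use the derivative estimate as in \eqref{termB}).

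It then suffices to insert the standard Chebyshev interpolation error estimates appropriate to each regularity class: Jackson-type bounds combined with the $O(\log N)$ Lebesgue constant of Chebyshev interpolation give $O(N^{-s}\log N)$ under \Cref{ass2}~\ref{ass21} (as in \cite[Theorem 1]{Prestin1993}); the $C^\infty$ case yields $O(N^{-r}\log N)$ for every $r\ge 1$; and the analytic case yields geometric decay $O(p^{-N})$ through extension of the coefficients to a Bernstein ellipse of parameter $p>1$, after which the logarithmic factor kept in $\rho_N$ is harmless.

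The main obstacle is the bootstrap that upgrades the regularity of $\psi\in\mathcal M_\lambda$ to match that of $\beta^\pm,b^\pm,\delta$: one has to verify that each application of $\mathcal H$ transfers regularity from the kernel to the image, notwithstanding that $\mathcal M^{-1}=\mathcal V_0(\mathcal I_X-\mathcal W)^{-1}\mathcal J$ mixes differentiation (through $\psi'$ appearing in $\mathcal W$) and integration (through $\mathcal V_0$). A secondary subtlety is the analytic case, where one needs a single Bernstein ellipse on which all coefficients, the inverse $(\mathcal I_X-\mathcal W)^{-1}$, and the eigenfunctions extend holomorphically, so that a common parameter $p>1$ can be extracted.
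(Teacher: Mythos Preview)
Your approach is essentially the same as the paper's, and it works, but there is a bookkeeping slip in item (a) that you should correct. When you apply $\widehat{\mathcal M}_N^{-1}-\mathcal M^{-1}$ to $\psi$, the operator $\mathcal B$ is \emph{not} involved: from \eqref{M1}--\eqref{M2} and \eqref{invespezz} the two terms that arise are $\|\mathcal J(\mathcal L_{N-1}-\mathcal I_Y)\psi\|_X$ and $\|(\mathcal L_{N-1}-\mathcal I_X)\mathcal W(\mathcal I_X-\mathcal W)^{-1}\mathcal J\psi\|_X$, not the corresponding expressions with $\mathcal B\psi$. This is harmless for the argument, since the regularity of $\psi$ and of $\mathcal W(\mathcal I_X-\mathcal W)^{-1}\mathcal J\psi$ is exactly what your bootstrap yields; indeed the paper's proof records precisely these two terms together with $\|(\mathcal L_{N-1}-\mathcal I_Y)\mathcal H\psi\|_Y$, and then asserts that $\mathcal T(\mathcal M_\lambda)$ has the claimed regularity for $\mathcal T\in\{\mathcal I_Y,\mathcal H,\mathcal W(\mathcal I_X-\mathcal W)^{-1}\mathcal J\}$.

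One small point the paper makes explicit and you leave implicit: since $\varepsilon_N$ is an operator norm on $\mathcal M_\lambda$, you need to pass from pointwise estimates on a single $\psi$ to a norm bound. The paper closes this by invoking the Riesz theory for compact operators, which guarantees $\dim\mathcal M_\lambda<\infty$, so equivalence of norms finishes the job. Your trick of writing $\mathcal B\mathcal M^{-1}\psi=\lambda\psi+\eta_\psi$ to reduce item (b) to interpolation on $\mathcal M_\lambda$ is a nice touch; the paper instead bounds $\|(\mathcal L_{N-1}-\mathcal I_Y)\mathcal H\psi\|_Y$ directly, using a derivative estimate for Chebyshev interpolation (of Mastroianni--Milovanovi\'c type) on $(\mathcal H\psi)'$. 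Both routes give the same $\rho_N$.
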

\begin{proof}
For $\psi_\lambda \in \mathcal M_\lambda$,  from \eqref{boundLn}, \eqref{M1}-\eqref{invespezz}, \eqref{a}-\eqref{c} and \eqref{unifboundB}, we have 
\begin{align}
\label{boundH1}\|(\widehat{\mathcal H}_N-\mathcal H)\psi_\lambda\|_{Y}\le &\ C\big(\|\mathcal J(\mathcal L_{N-1}-\mathcal I_Y)\psi_\lambda\|_{X}\\
\label{boundH2}&+\| ({\mathcal L}_{N-1}-\mathcal  I_X)\mathcal W(\mathcal I_{X}-\mathcal W)^{-1}\psi_\lambda\|_{X}\big)\\
\label{boundH3}& +\|({\mathcal L}_{N-1}-\mathcal I_Y)\mathcal H\psi_\lambda\|_{Y},
\end{align}
where $$C:=2\|(\mathcal I_{X} -\mathcal W)^{-1}\|_{X\leftarrow X}\left(\sup_{N\in\N}{\|({\mathcal L}_{N-1}-\mathcal I_Y)\mathcal B\|_{Y\leftarrow Y_0}}+\|\mathcal B\|_{Y\leftarrow Y_0}\right).$$ 
Moreover, for $\mathcal T\in \{\mathcal I_Y, \mathcal H, \mathcal W(\mathcal I_X-\mathcal W)^{-1}\mathcal J\}$, we have 
\begin{enumerate}
\item $\mathcal T(\mathcal M_{\lambda}) \subset W^{s+1, \infty}([0, a^\dagger], \C^{d\times d})$ under \Cref{ass2} \ref{ass21}, 
\item $\mathcal T(\mathcal M_{\lambda})\subset C^{\infty}([0, a^\dagger], \C^{d\times d})$ under \Cref{ass2} \ref{ass22},
\item $\mathcal T(\mathcal M_{\lambda})$ consists of analytic functions under \Cref{ass2} \ref{ass23}.
\end{enumerate}
Hence, we can bound the term on the right-hand side of \eqref{boundH1} as follows
\begin{equation*}
\|\mathcal J(\mathcal L_{N-1}-\mathcal I_Y)\psi_\lambda\|_X \le a^\dagger \|(\mathcal L_{N-1}-\mathcal I_Y)\psi_\lambda\|_\infty,
\end{equation*}
and, from Jackson's type theorems \cite[section 1.1.2]{Rivlin1981} we get  
\begin{equation*}
\|(\mathcal L_{N-1}-\mathcal I_Y)\psi_\lambda\|_\infty
\le O\left( (1+\Lambda_{N-1}) E_{N-1}(\psi_\lambda)\right)
=O(\rho_N),
\end{equation*}
where $\Lambda_{N-1}$ is the Lebesgue constant relevant to $\Theta_N$, that under \Cref{assCheb} is $O\left(\log(N)\right)$, and $E_{N-1}(\psi_\lambda)$ is the best uniform approximation error of $\psi_\lambda$ in the space of polynomials of degree at most $N-1$. 
The term in \eqref{boundH2} can be bounded as 
\begin{equation*}
\|(\mathcal L_{N-1}-\mathcal I_X)\mathcal W (\mathcal I_X-\mathcal W)^{-1}\psi_\lambda\|_X \le\ a^\dagger \|(\mathcal L_{N-1}-\mathcal I_Y)\mathcal W(\mathcal I_X-\mathcal W)^{-1}\psi_\lambda\|_\infty,
\end{equation*}
which, in turn, can be bounded as
\begin{equation*}
\|(\mathcal L_{N-1}-\mathcal I_Y)\mathcal W(\mathcal I_X-\mathcal W)^{-1}\psi_\lambda\|_\infty
\le O\left( (1+\Lambda_{N-1}) E_{N-1}(\mathcal W(\mathcal I_X-\mathcal W)^{-1}\psi_\lambda)\right)
=O(\rho_N).
\end{equation*}
Finally, the term in \eqref{boundH3} can be bounded as
\begin{align*}
\|({\mathcal L}_{N-1}-\mathcal I_Y)\mathcal H\psi_\lambda\|_{Y}
\le&\  
\left\|({\mathcal L}_{N-1}-\mathcal I_Y)\mathcal H\psi_\lambda\right\|_{\infty}+a^\dagger\left\|\left(({\mathcal L}_{N-1}-\mathcal I_Y)\mathcal H\psi_\lambda\right)'\right\|_\infty.
\end{align*}
For these, from \cite[section 1.1.2]{Rivlin1981} we get
\begin{gather*}
\left\|({\mathcal L}_{N-1}-\mathcal I_Y)\mathcal H\psi_\lambda\right\|_{\infty}=O\left((1+\Lambda_{N-1})E_{N-1}(\mathcal H\psi_\lambda)\right)=O(\rho_N),
\end{gather*}
and, from \cite[Theorem 4.2.11]{Mastroianni} we get
\begin{gather*}
\left\|(({\mathcal L}_{N-1}-\mathcal I_Y)\mathcal H\psi_\lambda)'\right\|_{\infty}=
O\left((1+\Lambda_{N-1})E_{N-2}\left((\mathcal H\psi_\lambda)'\right)\right)=O(\rho_N).
\end{gather*}
The thesis follows from the Riesz theory for compact operators \cite[section 3.1]{Kress1989}, which ensures that  the generalized eigenspaces of $\mathcal H$ have finite dimension, see \cite[Proposition 4.9]{liessi2018}.
\end{proof}

\begin{remark}\label{remOrder}
One can show that, under \Cref{ass2} \ref{ass21}, the convergence of $R_N = \rho(\mathcal{H}_N)$ to $R$ gains one order compared to $\rho_N$, i.e., it has order $O((N^{-(s+1)} \log N)^{1/l})$.
In fact, using the birth operator $\mathcal B$ and the transition operator $ \mathcal M$, we can consider the compact operator $\mathcal M^{-1}\mathcal B: Y \to D(\mathcal{M})$ whose eigenvalues $\lambda$ coincide with those of $\mathcal{H}.$ The eigenfunctions $\varphi _\lambda$ of $\mathcal M^{-1}\mathcal B$ (corresponding to $\lambda$) belong to $D(\mathcal{M})$, and satisfy $\varphi_\lambda = \mathcal M^{-1} \psi_\lambda,$ where $\psi_\lambda$ are the corresponding eigenfunctions of $\mathcal{H}.$ The reproduction number $R$ is also the spectral radius of $\mathcal M^{-1}\mathcal B,$ providing an alternative way to approximate it through the spectral radius of the matrix $\mathcal M_N^{-1}\mathcal B_N$. It is easy to see that the matrices $\mathcal M_N^{-1}\mathcal B_N$ and $\mathcal B_N\mathcal M_N^{-1}$ are similar, hence they have the same eigenvalues for every $N \geq 1$. The convergence analysis can be carried out as before. The operator $\mathcal P_{N,0}\mathcal M_N^{-1}\mathcal B_N \mathcal R_N:  Y \to  Y_{0, N}$ has the same nonzero eigenvalues with the same geometric and partial multiplicities of $\mathcal M_N^{-1}\mathcal B_N$ \cite[Proposition 4.2]{BredaMasetVermiglio2012}, and from \eqref{continvM} it can be expressed as $\widehat{\mathcal M}_N^{-1}\mathcal B \mathcal L_{N,0}$. Moreover, since $\Range(\widehat{\mathcal M}_N^{-1}) \subset Y_{0,N}$, $\widehat{\mathcal M}_N^{-1}\mathcal B  \mathcal L_{N,0}$ has  the same eigenvalues with the same geometric and partial multiplicities, and the same eigenfunctions as the operator $\widehat{\mathcal M}_N^{-1}\mathcal B$ \cite[Proposition 4.3]{BredaMasetVermiglio2012}.
The norm convergence of $\widehat{\mathcal M}_N^{-1}\mathcal B$ to $\mathcal M^{-1}\mathcal B$ easily follows from
\Cref{A}, and an analogous of \Cref{teospec} can be derived. The main difference is that, since the convergence order of the approximation error on the eigenvalue $\lambda$ is driven by the interpolation error on $\mathcal M^{-1}\mathcal B\varphi_\lambda$, which under \Cref{ass2} \ref{ass21} 
has one more degree of smoothness compared to $\mathcal H\psi_\lambda$, we get that $|R-R_N|=O((N^{-(s+1)}\log N)^{1/l})$. \nameref{Example 1} in section \ref{results} illustrates this behavior.
\end{remark}

\section{Implementation issues}\label{implementation}
Here we give an explicit description of the entries of the matrices $\mathcal B_N$ and $\mathcal M_N$.
For the sake of simplicity, we restrict to the case $d=1$.

Thanks to the cardinal property of the Lagrange polynomials ($\ell_{0,j}(a_i)=\delta_{ij}$, $i,j=0,\dots, N$, where $\delta_{ij}$ is the Kronecker's Delta), it is easy to see that the entries of the matrices are explicitly given by
\begin{align}\label{B_N_matrix}
(\mathcal B_{N})_{ij}=&\int_0^{a_i}\int_0^{a^\dagger}\beta^+(\zeta, \alpha)\ell_{0,j}'(\alpha)\dd \alpha\dd \zeta&\\\notag&+\int_0^{a^\dagger}b^+(\alpha)\ell_{0,j}'(\alpha)\dd \alpha,\qquad
&i,j=1,\dots N,    
\end{align} 
and
\begin{align}\label{M_N_matrix}
(\mathcal M_{N})_{ij}=\ &\ell_{0,j}'(a_i)-\int_0^{a_i}\delta(\alpha)\ell_{0,j}'(\alpha)\dd \alpha&\\\notag&-\int_0^{a_i}\int_0^{a^\dagger}\beta^-(\zeta, \alpha)\ell_{0,j}'(\alpha)\dd \alpha\dd \zeta&\\\notag&+\int_0^{a^\dagger}b^-(\alpha)\ell_{0,j}'(\alpha)\dd \alpha,&
i,j=1,\dots N.    
\end{align}

 When the integrals in \eqref{B_N_matrix} and \eqref{M_N_matrix} can not be computed analytically, we approximate them via a quadrature formula. 
For the integrals in $[0, a^\dagger]$, we use the Fejer's first rule quadrature formula \cite{ChebZeros}, and for the integrals in $[0, a_i]$, $i=1,\dots, N$, we use the quadrature weights given by the $i$-th row of the inverse of the differentiation matrix \cite{DiekmannScarabelVermiglio2020}
\begin{equation*}
    (\mathcal D_N)_{ij}:=\ell_{0,j}'(a_i),\quad i,j=1,\dots, N.
    \end{equation*}
 In this case, the bounds in \Cref{corollbound} are preserved under the following regularity conditions (see \cite[pp. 85]{davis2007} and \cite{Trefethen2008}).
\begin{assumption}
\hspace{2mm}
\begin{enumerate}
\item $b^+, b^-\in W^{s, \infty}([0,a^\dagger],\R^{d\times d})$  for some integer $s\ge 1$,
\item $b^+, b^-\in C^\infty([0,a^\dagger],\R^{d\times d})$,
\item $b^+, b^-$ are real analytic.
\end{enumerate} 
\end{assumption} 

In the presence of discontinuities in the model coefficients or in their derivatives, a piecewise approach can be used. In this case, one may choose as discretization points either the Chebyshev zeros extended with the left endpoint, or, to simplify the implementation, the Chebyshev extremal nodes \cite{Mastroianni, Trefethen2008, Trefethen2013}. In the latter case, to approximate the integrals in $[0, a^\dagger]$ we use the Clenshaw--Curtis quadrature formula \cite{ClenshawCurtis1960}. 
Note that the MATLAB demos available at \url{http://cdlab.uniud.it/software} used to make all the tests in \cref{results} implement these piecewise alternatives.

\section{Numerical results}\label{results}
In this section, we apply the numerical method to some instances of \eqref{prototype-model} to experimentally validate the theoretical results in
\cref{convanal}. In particular, we aim to illustrate the link between the convergence order of the numerical approximation $R_N$ of $R$ and the approximation error on the relevant eigenspaces (see \Cref{teospec} and \Cref{remOrder}). 
The latter depends on the regularity of the (generalized) eigenfunctions through the smoothness of the model parameters, see \Cref{corollbound} and \Cref{remOrder}. Thus, the first two examples are constructed to have explicit expressions for $R$ and the associated eigenfunctions. Both examples are scalar and represent the linearized equation for the infected individuals around the disease-free equilibrium. The first example is structured by demographic age (without vertical transmission), whereas the second one is structured by infection age, where the infection process is described by a boundary condition, to analyze the effect of the quadrature error. 
As final example we propose a three-dimensional model structured by demographic age, with horizontal and vertical transmission, and with piecewise $C^\infty$ coefficients estimated from real data.  In this case, analytic expressions for $R$ and the relevant eigenfunctions are not available. Hence, the errors are computed with respect to reference values (obtained with $N=120$). In all these examples, we compare the approximation errors on $R$ by using either the $N$ Chebyshev zeros extended with the left endpoint or the $N+1$  Chebyshev extrema.

\paragraph{Example 1}\label{Example 1}
Motivated by the SIR model structured by demographic age without vertical transmission (see \cite[Chapter II]{Iannelli1995} and \cite[Chapter 6]{Inaba2017}),
we consider the model \eqref{prototype-model} with $d=1$, and parameters
\begin{gather*}
\beta(a, \alpha)=\frac{1}{c }q(a)(a^\dagger-\alpha),\qquad b(a)\equiv 0,\qquad \delta(a)\equiv -\gamma,
\end{gather*}
where $a^\dagger, \gamma>0$,  $q$ is a given function and, for any choice of $q$,
\begin{equation*}
c:=\int_0^{a^\dagger}(a^\dagger-a)\int_0^ae^{-\gamma(a-\alpha)}q(\alpha)\dd \alpha \dd a.
\end{equation*}
With this choice of the parameters, the basic reproduction number is exactly $R_0=1$, and is obtained in our framework by taking $\beta^+=\beta$, $\beta^-\equiv 0$ and $b^+=b^-\equiv 0$.
$\mathcal H$ is a rank-one operator and the eigenfunction relevant to $R=R_0$ is explicitly known: $\psi(a)=\int_0^a q(s)\dd s$, $a\in[0, a^\dagger]$. Moreover, $\varphi(a)=\mathcal M^{-1}\psi(a)=\int_0^a e^{-\gamma(a-\alpha)}\psi(\alpha)\dd \alpha$, $a\in[0, a^\dagger]$.

\Cref{r0Ex1} and \Cref{r0Exnonsmooth} show, for increasing $N$, the error $|R-R_N|$ and the error $|(\mathcal H-\widehat{\mathcal H}_N)\psi_{|_{a=0}}|_{\C^d}+\|((\mathcal H-\widehat{\mathcal H}_N)\psi)'\|_\infty$, which gives a bound on $\varepsilon_N$ in \eqref{epsilon},  with $a^\dagger=\gamma=1$, and for three different choices of $q$, namely $q(a)=e^{-2a}$ (analytic, \Cref{r0Ex1}, left), $q(a)=e^{-(x-0.5)^{-2}}(x-0.5)^{-2}\chi_{[0.5, a^\dagger]}(a)$ ($C^\infty$, \Cref{r0Ex1},  right) and $q(a)=(0.5-a)^2|0.5-a|$ ($W^{3, \infty}$, \Cref{r0Exnonsmooth}).  
The infinite norm is estimated by computing the maximum absolute value over a mesh of $10^4$ equidistant points in $[0,a^\dagger]$. 

In \Cref{r0Ex1}, we observe infinite convergence order, being the relevant eigenfunction $\psi$ either analytic or $C^\infty$, confirming the validity of \Cref{corollbound} under \Cref{ass2} \ref{ass22} and \ref{ass23}, respectively.
\Cref{r0Exnonsmooth} shows order $4$ for the approximation error on $R$ (left and right), order $3$  for $|(\mathcal H-\widehat{\mathcal H}_N)\psi_{|_{a=0}}|_{\C^d}+\|((\mathcal H-\widehat{\mathcal H}_N)\psi)'\|_\infty$ (left), and order $4$ for the error $\|((\mathcal M-\widehat{\mathcal M}^{-1}_N)\mathcal B\varphi)'\|_\infty$ (right), in accordance with \Cref{remOrder}.  
Note that, in all the three cases, the behavior of the approximation error for the Chebyshev extrema is similar to that of the Chebyshev zeros extended with the left endpoint. 

\Cref{r0Ex2} illustrates how the behavior of the approximation error on $R$ depends on the magnitude of the recovery rate $\gamma$ and on the length of the age interval $a^\dagger$.
In particular, we observe that, in the case of large values of $\gamma$ or $a^\dagger$, a larger number of points $N$ is required to obtain small approximation errors. This can be explained from the fact that the approximation error is related to the interpolation error on the exponential function $e^{-\gamma a}$ in the operator $\mathcal{M}^{-1}$, which depends on the derivative of the function (whose norm increases with $\gamma$) and on the length of the interval. For large age intervals, the approximation error can be reduced by resorting to a piecewise approach, splitting the age-interval in smaller regions. 
As an example, in \Cref{piecewiseEx01} we illustrate the results obtained with the piecewise version of the method for the case $a^\dagger=30$ and $\gamma=100$. Therein, we split the age interval in $6$ sub-intervals, and we use a polynomial of degree $N$ in each of them. Note the different behavior between the case of odd and even Chebyshev zeros. 
\begin{remark}\label{remarkgamma}
In real-world applications, both $a^\dagger$ and $\gamma$ may be large. For instance: in a model for human populations structured by demographic age, $a^\dagger$ is typically assumed to be equal or larger than $75$ (yr), while in epidemiology $\gamma$ could be assumed to be larger than $30$ (yr$^{-1}$) for diseases that last less than 10 days on average. See for instance \cite{rubella1985} or \nameref{Example 3}.
\end{remark}

\begin{figure}
\begin{center}
\includegraphics[width=.84\textwidth]{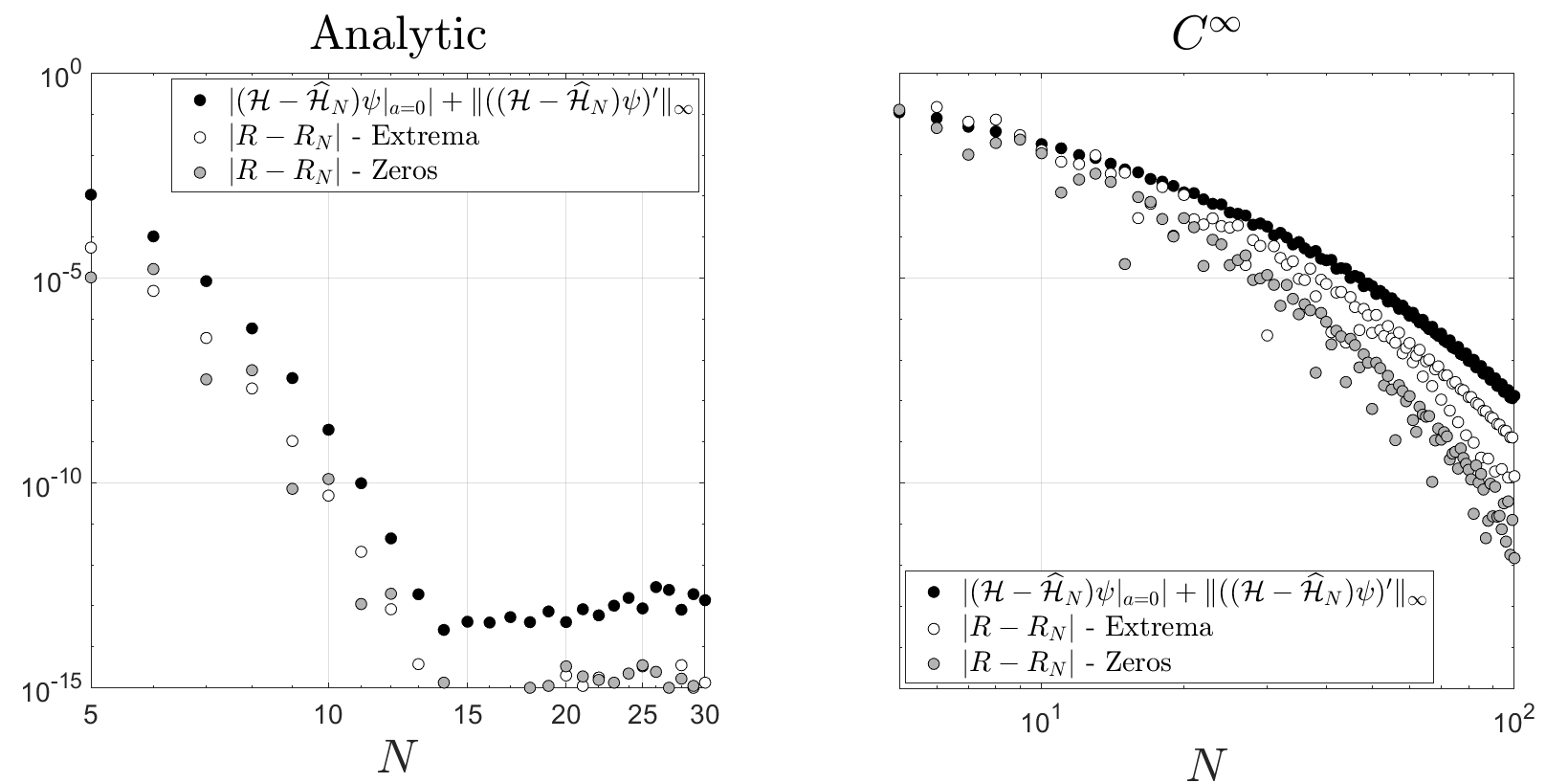}\label{r0Ex1}
\caption{\nameref{Example 1} with $q(a)=e^{-2a}$ (analytic, left) and $q(a)=e^{-(x-0.5)^{-2}}(x-0.5)^{-2}\chi_{[0.5, a^\dagger]}(a)$ ($C^\infty$, right), with $a^\dagger =  \gamma =1$.  Log-log plot for increasing $N$ of the approximation error on $R$ (white dots for the $N+1$ Chebyshev extrema and grey dots for the $N$ Chebyshev zeros extended with the left endpoint) and the  error $|(\mathcal H-\widehat{\mathcal H}_N)\psi_{|_{a=0}}|_{\C^d}+\|((\mathcal H-\widehat{\mathcal H}_N)\psi)'\|_\infty$ for the discretization at the Chebyshev zeros extended with the left endpoint (black dots). Infinite order of convergence is observed in both panels, in agreement with \Cref{corollbound}.}
\includegraphics[width=.84\textwidth]{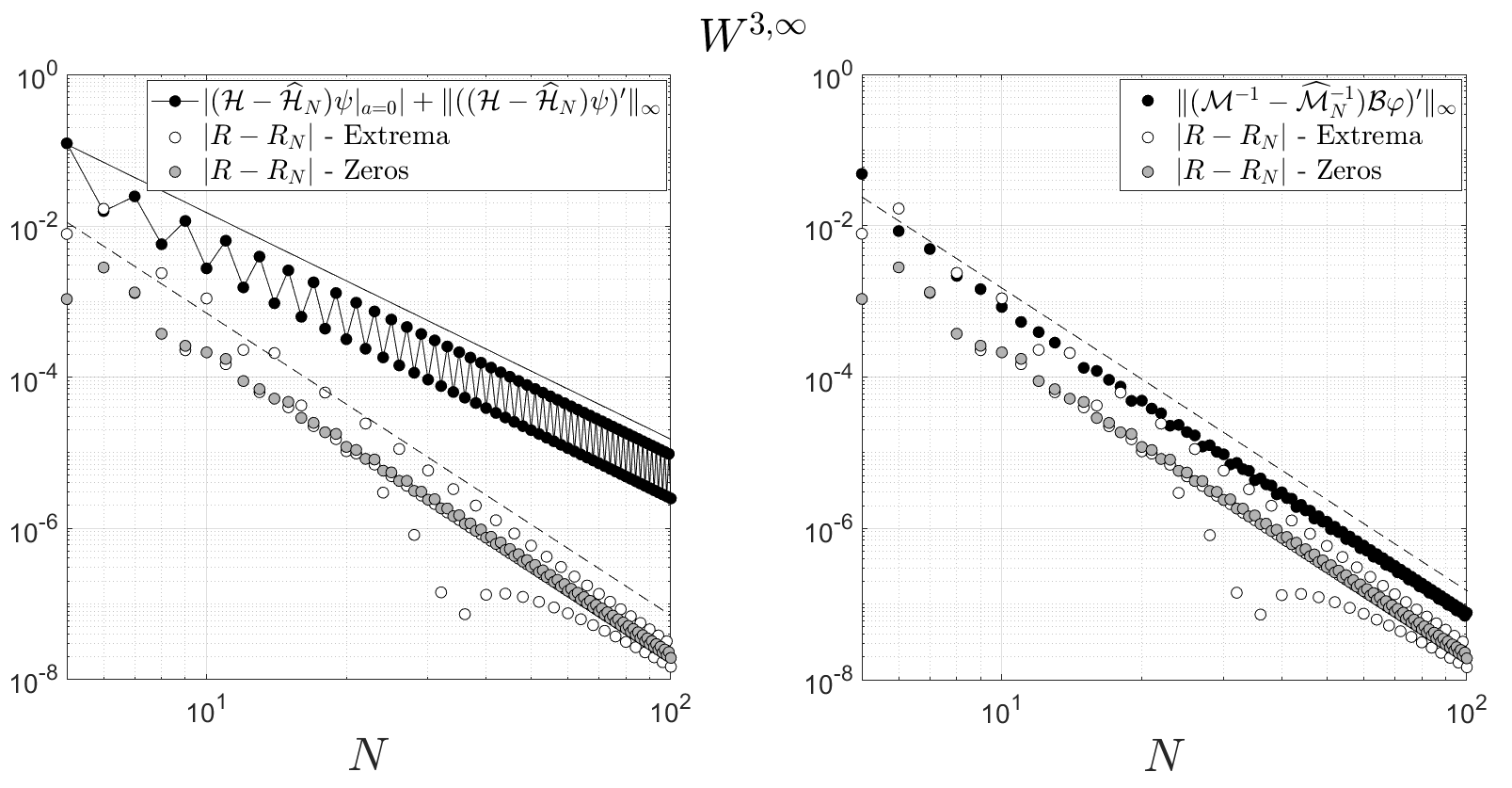}\label{r0Exnonsmooth}
\caption{\nameref{Example 1} with $q(a)=(0.5-a)^2|0.5-a|$ ($W^{3, \infty}$) and $a^\dagger =  \gamma =1$.  Left panel: log-log plot for increasing $N$ of the approximation error on $R$ (white dots for the $N+1$ Chebyshev extrema and grey dots for the $N$ Chebyshev zeros extended with the left endpoint) computed as $\rho(\mathcal H_N)$, and the error $|(\mathcal H-\widehat{\mathcal H}_N)\psi_{|_{a=0}}|_{\C^d}+\|((\mathcal H-\widehat{\mathcal H}_N)\psi)'\|_\infty$ for the discretization at the Chebyshev zeros extended with the left endpoint (black dots). Convergence of order $4$ and order $3$ is observed for $|R-R_N|$ and $|(\mathcal H-\widehat{\mathcal H}_N)\psi_{|_{a=0}}|_{\C^d}+\|((\mathcal H-\widehat{\mathcal H}_N)\psi)'\|_\infty$, respectively. 
Right panel: log-log plot for increasing $N$ of the approximation error on $R$ (white dots for the $N+1$ Chebyshev extrema and grey dots for the $N$ Chebyshev zeros extended with the left endpoint) computed as $\rho(\mathcal M^{-1}_N\mathcal B_N)$,  and the  error $\|((\mathcal M-\widehat{\mathcal M}^{-1}_N)\mathcal B\phi)'\|_\infty$ for the discretization at the Chebyshev zeros extended with the left endpoint. 
Convergence of order $4$ is observed for all the errors, in agreement with \Cref{remOrder}. The dashed lines have slope $-4$, while the solid line has slope $-3$. }
\end{center}
\end{figure}
\begin{figure}
\begin{center}
\includegraphics[width=.9\textwidth]{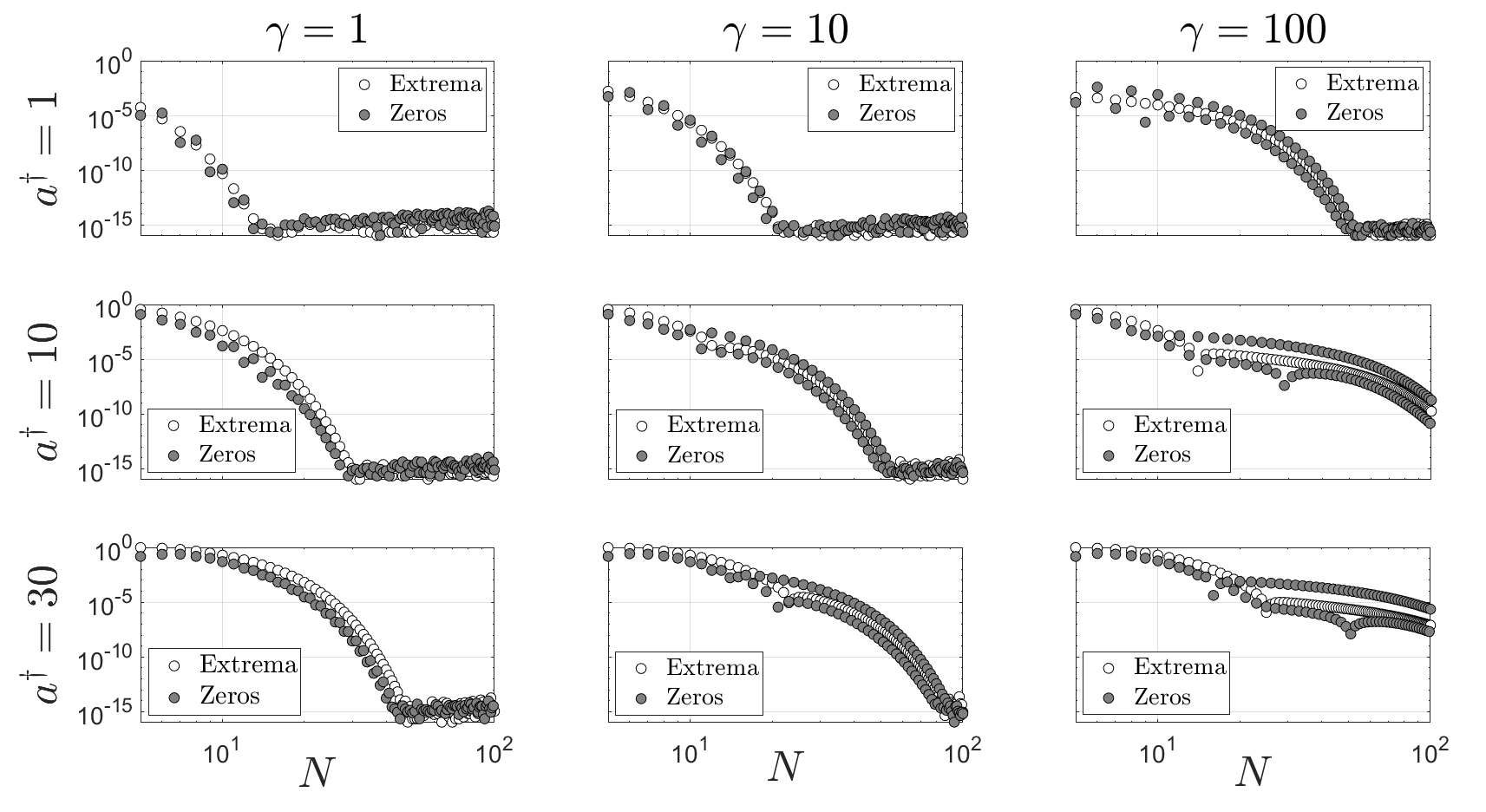}\label{r0Ex2}
\caption{\nameref{Example 1} with  $q(a)=e^{-2a}$. Log-log plot for increasing $N$ of the approximation error on $R$ (white dots for the $N+1$ Chebyshev extrema and grey dots for the $N$ Chebyshev zeros extended with the left endpoint) varying $a^\dagger$ and $\gamma$.}
\end{center}
\end{figure}
\begin{figure}
\begin{center}
\includegraphics[width=.43\textwidth]{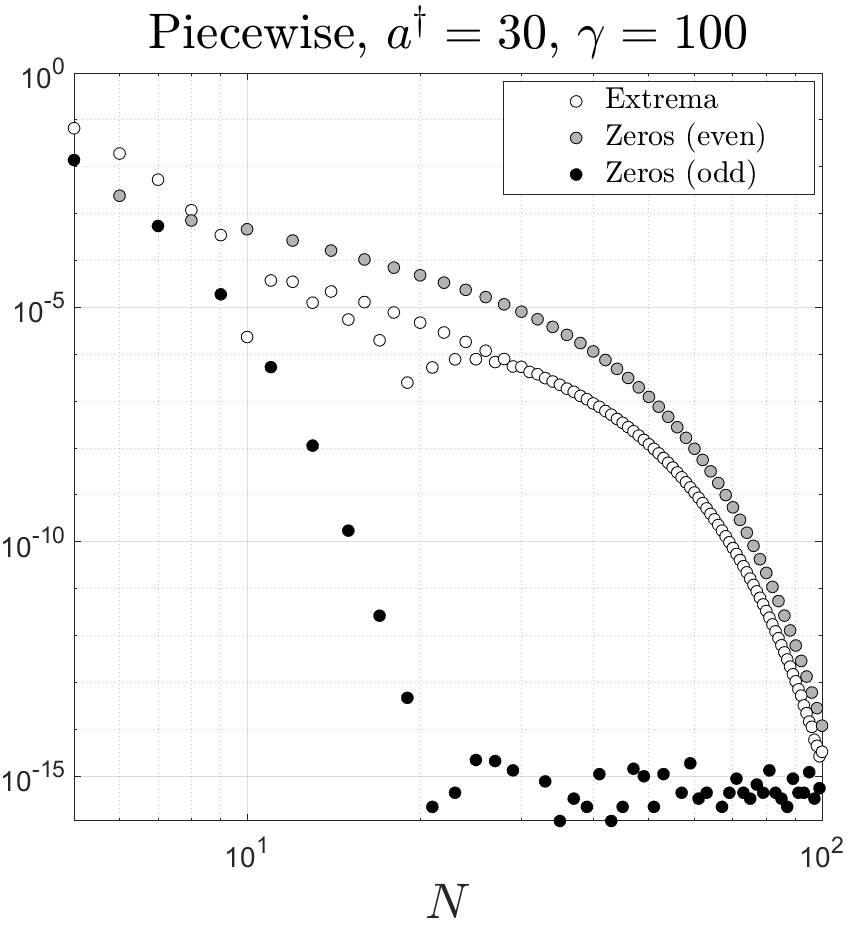}\label{piecewiseEx01}
\caption{\nameref{Example 1} with $q(a)=e^{-2a}$, $a^\dagger =30$ and $\gamma =100$. Log-log plot for increasing $N$ of the piecewise approximation error on $R$ (white dots for the $N+1$ Chebyshev extrema,  grey and black dots for the $N$ Chebyshev zeros extended with the left endpoint with $N$ even and  $N$ odd, respectively). Note the different behavior of the error for odd and  even $N$ for the Chebyshev zeros.}
\end{center}
\end{figure}

\paragraph{Example 2}\label{Example 2}
We consider an SIR model where infected individuals are structured by infection age, see for example \cite[Chapter 7]{Iannelli1995}.
The linearized equation for the infected individuals around the disease-free equilibrium
can be recast in \eqref{prototype-model} by taking $d=1$, $\beta(a, \alpha)\equiv 0$ and $\delta(a)\equiv -\gamma$, for $\gamma>0$.
In the following, we do not consider the presence of control measures. 

To investigate the convergence of our method, we consider the following explicit expression for the basic reproduction number $R_0$ \cite[section 5.3]{Inaba2017}:
\begin{equation}\label{R0TSI}
R_0=\int_0^{a^\dagger}\beta(a)e^{-\int_0^a \gamma(\alpha)\dd \alpha}\dd a.
\end{equation}
We take $a^\dagger=14$, and  $b$ and $\gamma$ such that $b(a)e^{-\gamma a} = \Gamma(k, \theta)(a)$, where $\Gamma(k, \theta)$ is a truncated Gamma density function with shape parameter $k>0$ and scale parameter $\theta>0$, normalized in the interval $[0, a^\dagger]$ \cite[section 4.1]{de2024approximating}. In particular we take $b(a)=ca^{k}$, $\gamma =1/\theta$, and $\theta=0.25$,  where $c:=\|\Gamma(k, \theta)\|_{X}^{-1}$.
For this choice, from \eqref{R0TSI}, we get that $R_0=1$. 
In our framework, we take $b^+=b$ and $b^-\equiv0$. The relevant eigenfunction is constant.

\Cref{tsifig} shows the behavior of the approximation error on $R=R_0$ for $k=2$ and $k=\pi$, see \cite[Table 1]{scarabel2023}. The method converges with infinite order for $k=2$, i.e., when $b$ is of class $C^\infty$. Finite convergence order is observed for $k=\pi$, i.e., when $b''$ has a pole in $a=0$. This illustrates how the quadrature errors affect the convergence order on $R$, even in the case of a constant eigenfunction. As pointed out in \cite{scarabel2023}, in this case the order of convergence can be improved by computing the integrals, for example, with the  MATLAB built-in  integral function. 
\begin{figure}
\begin{center}
\includegraphics[width=.84\textwidth]{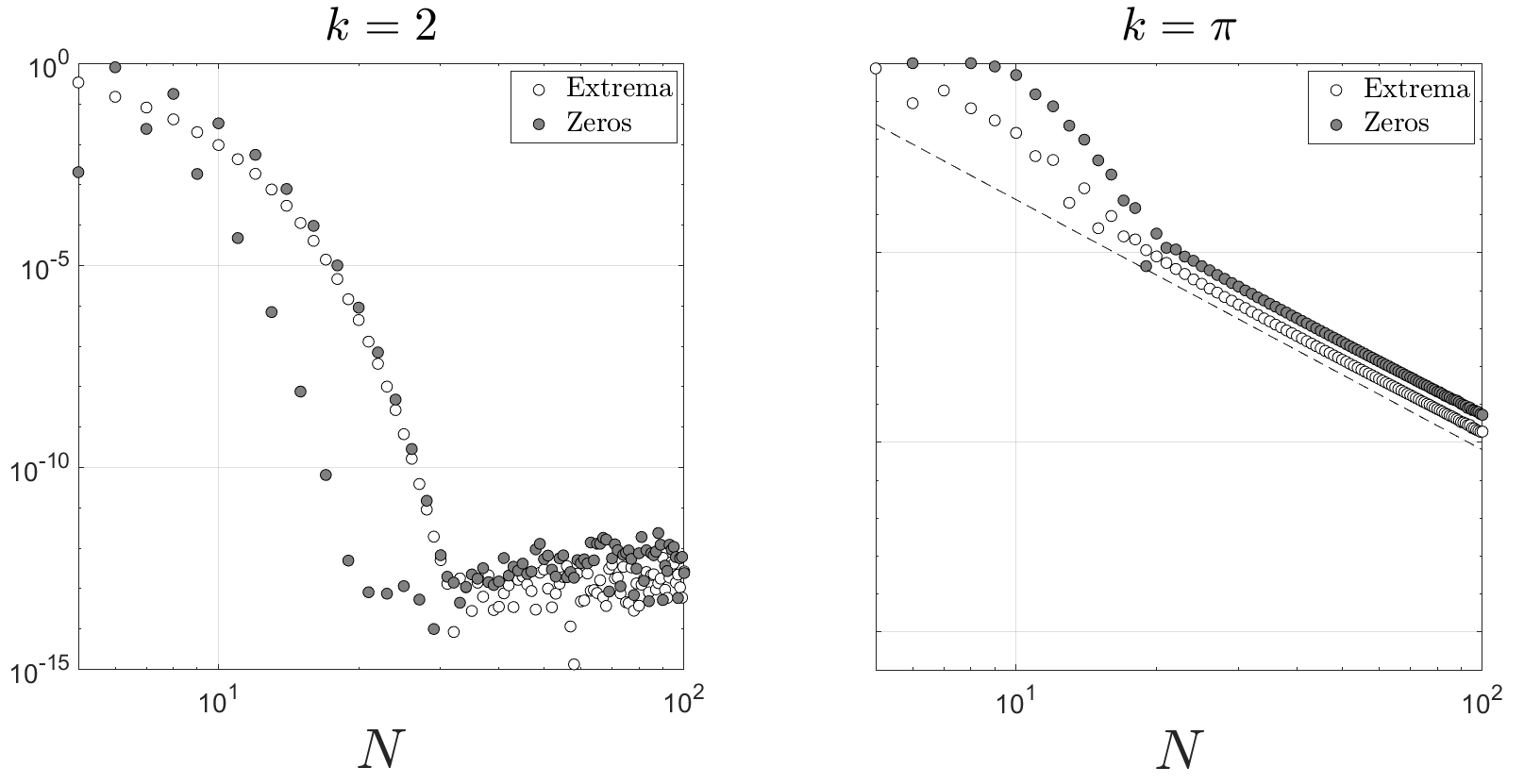}\label{tsifig}
\caption{\nameref{Example 2}. Log-log plot of the approximation error for increasing $N$ on $R=1$ (white dots for the $N+1$ Chebyshev extrema and grey dots for the $N$ Chebyshev zeros extended with the left endpoint), for $k=2$ (left) and $k=\pi$ (right). The slope of the dashed line is $-6.5$.}
\end{center}
\end{figure}

\paragraph{Example 3}\label{Example 3}
We consider a model inspired by \cite{Zhao2000, Zou2010} for the spread of Hepatitis B (HBV) in China, and we refer to \cite{reviewHBV} for a recent review of other models of the literature. 
Let $S(t,a)$, $L(t, a)$, $I(t,a)$, $C(t,a)$, $R(t,a)$ and $V(t,a)$ denote the density of individuals that are susceptible, latent (infected but not infectious), acutely infected, chronically infected, recovered, and vaccinated, respectively, at time $t\ge 0$ and demographic age $a\in [0, a^\dagger]$. The model reads
\begin{equation*}
\left\{\setlength\arraycolsep{0.1em}\begin{array}{rlll} 
\mathcal D S(t, a)&=&\omega V(t, a)-(\mu(a)+\lambda(t, a)+\nu(a))S(t, a), &\\[3mm]
\mathcal D L(t, a)&=&\lambda(t,a) S(t, a)-(\mu(a)+\sigma)L(t, a), &\\[3mm]
\mathcal D I(t, a)&=&\sigma L(t, a)-(\mu(a)+\gamma_1 )I(t, a), &\\[3mm]
\mathcal D C(t, a)&=&p(a)\gamma_1 I(t, a)-(\mu(a)+\gamma_2(a))C(t, a)
, &\\[3mm]
\mathcal D R(t, a)&=&\gamma_2(a)C(t, a)+(1-p(a))\gamma_1 I(t, a)-\mu(a)R(t, a)
, &\\[3mm]
\mathcal D V(t, a)&=&\nu(a)S(t, a)-(\omega+\mu(a))V(t, a), &\\
\end{array} 
\right.
\end{equation*}
for $t\ge 0$ and $a\in [0, a^\dagger]$, with boundary conditions \cite{Edmunds1996}
\begin{equation*}
\left\{\setlength\arraycolsep{0.1em}\begin{array}{rlll} 
S(t, 0)&=& \theta\displaystyle\int_0^{a^\dagger} f(a)\left(P(t,a)-q_1 I(t,a) -q_2 C(t,a)\right)\dd a, &\quad \\
L(t, 0)&=&\theta\displaystyle\int_0^{a^\dagger} f(a)\left(q_1 I(t,a) +q_2 C(t,a)\right)\dd a, &\quad \\
V(t, 0)&=&(1-\theta)\displaystyle\int_0^{a^\dagger} f(a) P(t,a)\dd a, &\quad \\[3.5mm]
I(t, 0)&=&C(t, 0)=R(t,0)=0, &\quad 
\end{array}\right.
\end{equation*}
for $t\ge 0$, where \cite{Edmunds1996, AndersonHBV}
\begin{equation*}
\lambda(t,a)=\int_0^{a^\dagger} \hat k(a, \alpha)\cfrac{I(t, \alpha)+\epsilon C(t, \alpha)}{\int_0^{a^\dagger}P(t, a)\dd a}\dd \alpha,    
\end{equation*}
and $P:=S+L+I+R+V+C.$
Here, $\mu$ is the natural  mortality rate (yr$^{-1})$, 
$f$ is the per capita birth rate (yr$^{-1})$, $\hat k$ is the per capita transmission rate (yr$^{-1})$, $\epsilon$ is the relative transmission rate of chronic carriers, $\sigma$ is the rate of moving from latent to infectious phase (yr$^{-1})$, $\gamma_1$ is the rate of moving from acute infection to recovered or chronic (yr$^{-1})$, $p$ is the probability of becoming chronic instead of recovering, $\gamma_2$ is the recovery rate of chronic infection (yr$^{-1})$,
$\nu$ is the per capita vaccination rate of individuals of age $a>0$ (yr$^{-1})$, $\omega$ is the rate of waning of vaccine-induced immunity (yr$^{-1})$, $q_1$ and $q_2$ are the fraction of perinatally infected from individuals in the acute and chronic phase, respectively, and $\theta$ is the fraction of failed vaccinations at birth.
We assume  that the host population is at a demographic steady state, i.e., $\int_0^{a^\dagger}f(a)\Pi(a)\dd a=1$ holds for $\Pi(a):=\exp(-\int_0^a \mu(\xi)\dd \xi)$, and that it has already attained the stable age distribution $$P(t,a)\equiv P^*(a)=P_0\Pi(a)\left(\int_0^{a^\dagger}\Pi(\xi)\dd\xi\right)^{-1}, \qquad a \in [0, a^\dagger],$$ for some $P_0>0$ \cite[Chapter 8]{Inaba2017}. Then, by defining
$s:=S/P$, $l:=L/P$, $i:=I/P$, $c:=C/P$, $r:=R/P$,  and $v:=V/P$,
the resulting model has the disease-free equilibrium $(s^*,l^*,i^*, c^*, r^*, v^*)=(s^*,0,0, 0, 0, 1-s^*),$ where 
\begin{equation*}
s^*(a)=\theta{\e}^{-\omega a-\int_0^a \nu(\alpha) \dd\alpha}+\omega\displaystyle\int_0^a {\e}^{-\omega (a-s)-\int_s^a \nu(\alpha) \dd\alpha}\dd s,\qquad a \in [0, a^\dagger]. 
\end{equation*}
Observe that, in the absence of vaccination ($\nu\equiv 0$ and $\theta=1$), we have $s^*\equiv 1$.
The linearized equations for the infected individuals around the disease-free equilibrium read
\begin{equation}\label{HBVlin}
\left\{\setlength\arraycolsep{0.1em}\begin{array}{rlll} 
\mathcal D l(t, a)&=& s^*(a) \displaystyle\int_0^{a^\dagger} k(a, \alpha)(i(t, \alpha)+\epsilon c(t, \alpha))\dd \alpha -\sigma l(t, a)
, &\\[3.5mm]
\mathcal D i(t, a)&=&\sigma l(t, a)-\gamma_1 i(t, a),&\\[3.5mm]
\mathcal D c(t, a)&=& p(a)\gamma_1 i(t, a)-\gamma_2(a) c(t, a),&\\[0.5mm]
l(t, 0)&=&\theta\displaystyle\int_{0}^{a^\dagger}  f(a) \Pi(a) \left(q_1 i(t, a)+q_2 c(t,a)\right)\dd a,&\\[3.5mm]
i(t, 0)&=&0,&\\[3.5mm]
c(t, 0)&=& 0,&
\end{array} 
\right.
\end{equation}
for $t\ge 0,\ a\in [0, a^\dagger]$, where $k(a, \alpha):=\hat k(a, \alpha)\Pi(a)(\int_0^{a^\dagger}\Pi(\xi)\dd\xi)$.
\eqref{HBVlin} can be recast in \eqref{prototype-model} by taking 
\begin{align*}
\beta(a, \alpha)=
s^*(a)k(a, \alpha)\begin{pmatrix}
0 & 1 & \epsilon \\
0 & 0 & 0 \\
0 & 0 & 0
\end{pmatrix},\quad
b(a)=\theta f(a)\Pi(a)
\begin{pmatrix}
0 & q_1 & q_2 \\
0 & 0 & 0 \\
0 & 0 & 0
\end{pmatrix},
\end{align*}
and
\begin{equation*}
\delta(a)=\begin{pmatrix}
-\sigma & 0 & 0 \\
\sigma & -\gamma_1 & 0 \\
0 & p(a)\gamma_1 & -\gamma_2
\end{pmatrix}.
\end{equation*}
For this model, we can compute three reproduction numbers: the basic reproduction number $R_0$, for $\beta^+=\beta$, $\beta^-\equiv 0$, $b^+=b$, $b^-\equiv 0$; the type reproduction number for horizontal transmission $T_H$, for $\beta^+=\beta$, $\beta^-\equiv 0$, $b^+\equiv 0$, $b^-= b$; and the type reproduction number for vertical transmission $T_V$, for $\beta^+\equiv 0$, $\beta^-=\beta$, $b^+=b$, $b^-\equiv 0$.

Following \cite{Edmunds1996, Zhao2000, Zou2010}, we assume $a^\dagger=75$, $\Pi\equiv 1$, $\epsilon = 0.16$, $\sigma=6$, $\gamma_1=4$, $\gamma_2=0.025$, $\omega=0.1$, $q_1=0.711$, $q_2=0.109$,
\begin{equation*}
p(a)=0.176501\exp(-0.787711a)+0.02116,\qquad a\in[0, a^\dagger],
\end{equation*}
$f(a)=0.018\chi_{[18, a^\dagger]}(a)$ for $a\in[0, a^\dagger]$, and we vary $\nu, \theta$ in $[0, 1]$. As for $k$, we estimate it from real data. In \cite[Formula 2]{Zhao2000} the authors give the following form of the force of infection 
\begin{equation}\label{foihbvformula}
    \lambda(a):=
    \left\{\setlength\arraycolsep{0.1em}\begin{array}{lll} 
    0.13074116&-1.362531\cdot 10^{-2}a\\
    &+4.6463\cdot 10^{-4}a^2- 4.89\cdot 10^{-6}a^3, &\quad a\in [0, 47.5],\\[3mm]
    \lambda(47.5),& & \quad a\in(47.5, a^\dagger],
\end{array} 
\right.
\end{equation}
which was estimated from serological data by applying the procedure described in \cite{grenfell1985}.
Here, in order to estimate $k$, we assume that it is piecewise constant among different age-groups, i.e.,
\begin{equation*}
k(a, \alpha)=k_{ij},\quad \text{for}\quad (a, \alpha)\in [\bar a_{i-1}, \bar a_i)\times[\bar a_{j-1}, \bar a_j),\quad i,j=1,\dots, 7,
\end{equation*}
where the age-groups are listed in \Cref{hbvFOIPCW}.
This gives us a \emph{Who Acquires Infection From Whom} (WAIFW) matrix $(k_{ij})_{i,j=1,\dots, 7}$, which can be estimated
by applying the well-kwown procedure of \cite[Appendix A]{Anderson1985}.\footnote{We estimate $k$ by using $\gamma_1$ as recovery parameter and by neglecting, in first approximation, the role of chronic carriers in the transmission pattern.} For doing this, we need to assume a particular form for the WAIFW matrix (otherwise the estimation problem is over-determined). Here we chose the one described in \cite[Table II]{Edmunds1996} and we refer to \cite[Appendix A]{Anderson1985} for other possible choices. More in details,  $(k_{ij})_{i,j=1,\dots, 7}$ is assumed to be symmetric with elements $k_{ij}=k_i$ for $i\ge j$.
\begin{table}
\footnotesize\caption{Age-specific forces of infection $\lambda_i$'s (yr$^{-1}$) derived from \cite{Zhao2000}, and corresponding $k_i$'s defining the entries of the WAIFW, for $i=1,\dots,7$.}
\label{hbvFOIPCW}
\begin{center}
\begin{tabular}{c | c  c  c c c c c}
\rowcolor{gray!20}
Age class (years) & $0-2$  & $3-5$ & $6-9$ & $10-14$ & $15-29$ & $30-49$ & $50-75$\\
\hline
$\lambda_i$ (assumed) & $0.112$ & $0.079$ & $0.049$ & $0.024$ & $0.006$ & $0.013$ & $0.008$ \\
$k_{i}$ (computed) & $1.070$ & $0.607$ & $0.338$ & $0.149$ & $0.027$ & $0.068$ & $0.041$ \\
\hline
\end{tabular}
\end{center}
\end{table}
Then, in order to simplify the estimation of $k$, we take the mean values among different age-intervals of the age-specific force of infection in \eqref{foihbvformula}, see \Cref{hbvFOIPCW}.
The age groups are chosen by merging the original age-group division considered in \cite{Zhao2000}, so that the piecewise force of infection captures the main geometrical features of \eqref{foihbvformula}.
\begin{figure}
\begin{center}
\includegraphics[width=1.\textwidth]{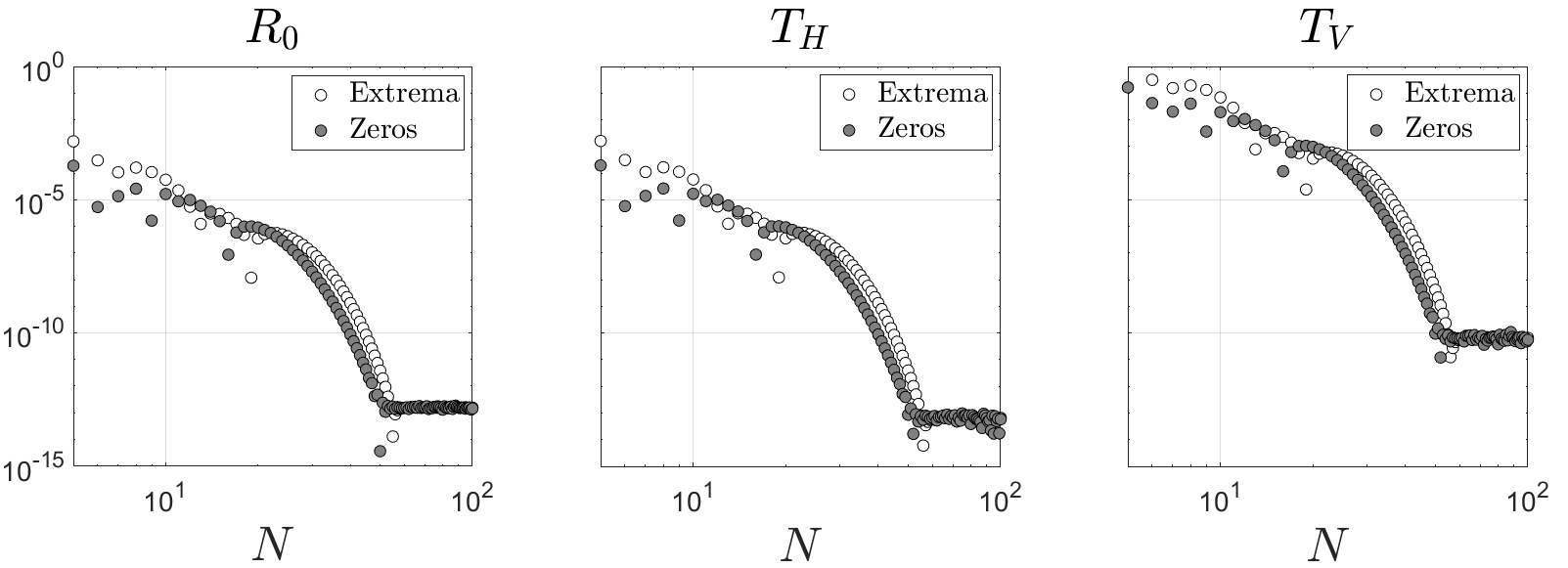}\label{hbvconv}
\caption{\nameref{Example 3}. Log-log plot of the absolute approximation errors for increasing $N$ on $R_0$ (left), $T_H$ (center) and $T_V$ (right), with $\nu=0.1$ and $\theta=0.59$ (white dots for the $N+1$ Chebyshev extrema and grey dots for the $N$ Chebyshev zeros extended with the left endpoint). The reference values $R_0\approx 1.048182936983250$, $T_H\approx 1.004493064088357$, and $T_V\approx 2.765546573797665$, are obtained with $N=120$.}
\end{center}
\end{figure}
\begin{figure}
\begin{center}
\includegraphics[width=.84\textwidth]{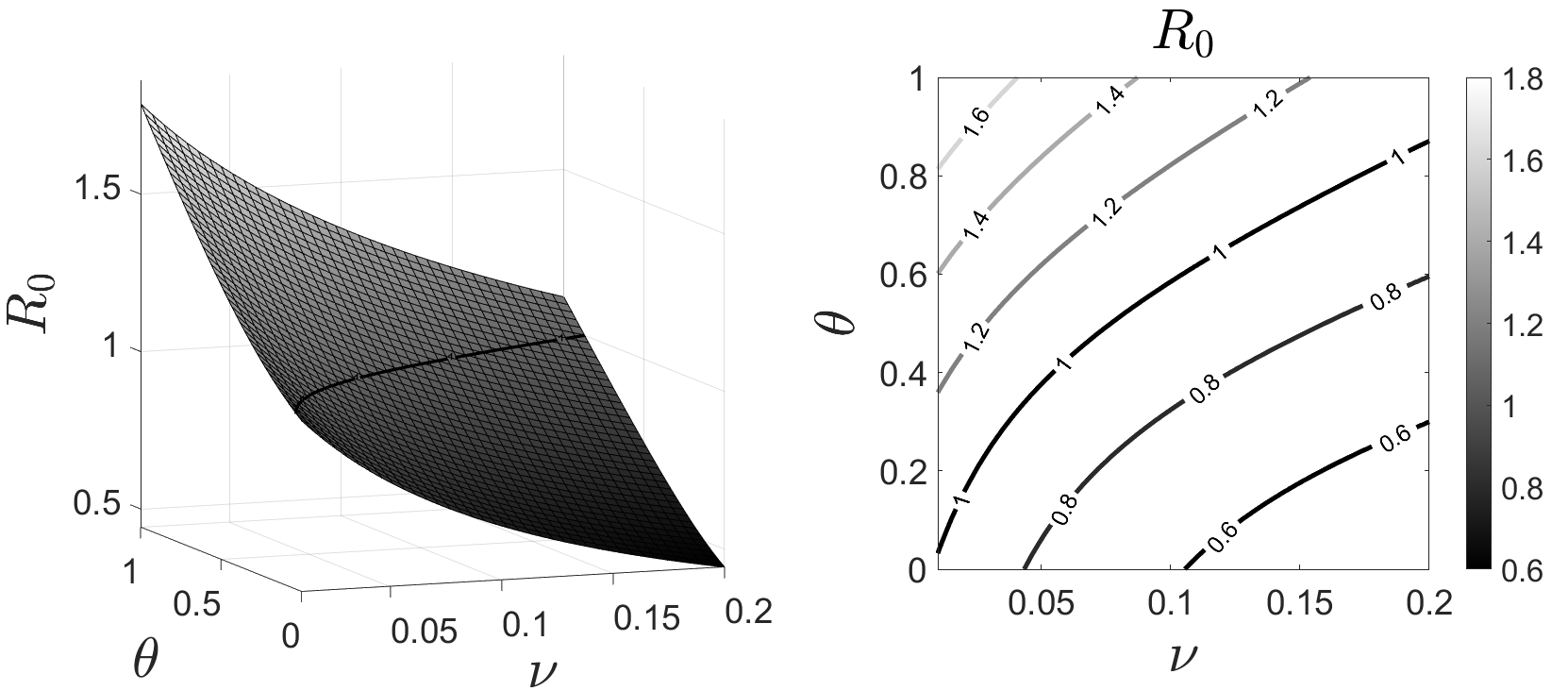}\label{HBVR0}
\caption{\nameref{Example 3}. $R_0$ as a function of $\nu$ and $\theta$ (left) and relevant level curves (right) computed with $N=50$.}
\end{center}
\end{figure}

\Cref{hbvconv} shows that the approximating reproduction numbers converge with infinite order, although $N \gtrsim 20$
nodes in each sub-interval are required to appreciate this behavior. This can be explained by the fact that, even though we are using a piecewise approach here, some of the age sub-intervals are large, as already discussed in \nameref{remarkgamma}.

\Cref{HBVR0} shows a practical application of the method. Therein, the behavior of $R_0$ as a function of the fraction of failed vaccinations at birth $\theta$ and the per-capita vaccination rate $\nu$ is investigated. This shows that, even in the presence of vaccination, a large fraction of failed vaccinations at birth could lead to the spread of the epidemic, while this could be prevented for larger values of $\nu$. Let us note that this result extends the one presented in \cite[Figure 3]{Zou2010}, where the behavior of $R_0$ is investigated by neglecting the effect of vertical transmission.

\section*{Acknowledgments}
The authors are members of the INdAM Research group GNCS and of the UMI Research group ``Mo\-del\-li\-sti\-ca so\-cio-epi\-de\-mio\-lo\-gi\-ca''.
The work of Simone De Reggi and Rossana Vermiglio was supported by the Italian Ministry of University and Research (MUR) through the PRIN 2020 project (No. 2020JLWP23) ``Integrated Mathematical Approaches to Socio-Epidemiological Dynamics'', Unit of Udine (CUP: G25F22000430006).

 \printbibliography

\end{document}